\documentclass[preprint,12pt]{elsarticle}




\addtolength{\topmargin}{-9mm}
\setlength{\oddsidemargin}{5mm}  
\setlength{\evensidemargin}{0mm}
\setlength{\textwidth}{16cm}
\setlength{\textheight}{21cm}    

\usepackage{color}
\usepackage{amssymb}
\usepackage{amsthm}
\usepackage{amsmath}
\usepackage{epic}
\usepackage{setspace} 
\usepackage{bm}
\usepackage{multirow} 
\usepackage{booktabs}
\usepackage{graphicx}
\usepackage{subcaption}
\usepackage{threeparttable}
\usepackage{epstopdf}
\usepackage{algorithm}    
\usepackage{algorithmicx} 
\usepackage{algpseudocode}
\newtheorem{theorem}{Theorem}[section]

\newtheorem{lemma}[theorem]{Lemma}

\newtheorem{definition}[theorem]{Definition}


\journal{~~}

\begin{document}
\begin{spacing}{1.15}
\begin{frontmatter}
\title{\textbf{The $k$-core of a graph and its high-order spectra}}

\author[label a]{Chunmeng Liu}\ead{liuchunmeng214@nenu.edu.cn/liuchunmeng0214@126.com}
\author[label b]{Qing Xu\corref{cor}}\ead{qinxu191@163.com}
\author[label b]{Changjiang Bu}\ead{buchangjiang@hrbeu.edu.cn}
\cortext[cor]{Corresponding author}

\address{
\address[label a]{Academy for Advanced Interdisciplinary Studies, Northeast Normal University, Changchun 130024, PR China}
\address[label b]{School of Mathematical Sciences, Harbin Engineering University, Harbin 150001, PR China}
}

\begin{abstract}
The $k$-core of a graph is its largest subgraph with minimum degree at least $k$, a fundamental concept for uncovering hierarchical structures.
In this paper, we establish a connection between the $k$-core and the high-order spectra of graphs, a concept originally introduced by Cvetkovi\'{c}, Doob, and Sachs.
Specifically, we consider the high-order spectra defined via the $k$-adjacency tensor.
Within this framework, we prove that a graph admits a non-empty $k$-core if and only if the spectral radius of the $k$-adjacency tensor is greater than or equal to $1$.
Moreover, when the $k$-core exists, vertices corresponding to positive entries in the Perron vector of the $k$-adjacency tensor belong to the $k$-core.
We thus define the $k$-order eigenvector centrality via the Perron vector, which provides both membership identification and a measure of relative influence within the $k$-core.
Numerical experiments confirm our theoretical findings and illustrate the properties of this centrality measure in some real-world networks.
\end{abstract}

\begin{keyword}
$k$-core; Tensor; Spectral radius; Perron vector; Centrality
\\
\emph{AMS classification:} 05C50, 05C69, 15A69
\end{keyword}
\end{frontmatter}

\section{Introduction}

All graphs discussed in this paper are simple, meaning they are undirected and contain no loops or multiple edges.
The \emph{$k$-core} of a graph $G$ is the maximal subgraph of $G$ in which every vertex has degree at least $k$ within the subgraph.
This concept can be traced back to Erd\H{o}s and Hajnal \cite{Erdos_1966} in 1966 when they studied the chromatic number of a graph.
In 1970, Lick and White \cite{Lick_1970} introduced an equivalent concept known as \emph{degeneracy}, defined as the smallest integer $k$ for which every induced subgraph of $G$ contains a vertex of degree at most $k$.
In 1983, Seidman \cite{Seidman_1983} introduced the $k$-core pruning algorithm, which iteratively removes vertices with degree less than $k$.
Due to its wide applicability, the $k$-core has been used across numerous fields. For a comprehensive overview of its theory and applications, we refer readers to the survey \cite{Kong_2019}.
Much of the existing theoretical work on the $k$-core has focused on its behavior in random graphs (see, e.g., \cite{Luczak_1991,Pittel_1996,Riordan_2008}).
In this paper, our work investigates a deterministic spectral characterization of the $k$-core.
We establish a fundamental link between this core decomposition and the high-order spectra of graphs.

The study of \emph{higher-order spectra} was pioneered, in part, by the work of Cvetkovi\'{c}, Doob, and Sachs, who introduced the concept of quadratic eigenvalues for graphs.
These are defined as complex numbers $\lambda$ for which there exists a nonzero vector $\mathbf{x}$ satisfying a system of quadratic equations derived from the graph's neighborhood structure (see \cite{Cvetkovic_1980}).
This foundational idea was later generalized using the spectral theory of tensors \cite{Liu_2023}.
A \emph{tensor} is a multidimensional array, whose eigenvalues were independently defined by Qi \cite{Qi_2005} and Lim \cite{Lim_2005}.
Tensor's eigenvalues have enabled a wide range of applications, including polynomial optimization \cite{app_3}, network analysis \cite{app_4}, solving multilinear systems \cite{app_5,app_6}, spectral hypergraph theory \cite{app_7,app_8,app_9,app_10,app_11}, etc.
Notably, the quadratic eigenvalues introduced by Cvetkovi\'{c} et al. can be viewed as eigenvalues of specific third-order tensors.
More recently, Liu et al. \cite{Liu_2023} reformulated and extended higher-order spectra through the tensor's eigenvalues, introducing the $k$-adjacency tensor of a graph.
This framework facilitated the generalization of several spectral bounds and graph-theoretic results.
Subsequent work \cite{Liu_2023_2} introduced the $t$-clique tensor, which extended the study of spectral Tur\'{a}n-type extremal problems to the high-order setting.
Further developments in this direction can be found in a growing body of literature \cite{Liu_2024,Liu_2026,Peng_2025,Peng_2026_1,Peng_2026_2,Wang_2025}.

The structural insights provided by the $k$-core and high-order spectra naturally lead to a discussion of vertex centrality-a fundamental tool for quantifying vertex's importance in complex networks.
To compare the importance of different vertices, the vertex's importance needs to be mapped to a real number.
This method of mapping vertex's importance to a real number is known as a vertex centrality measure \cite{Lu_2016,Lu_2025}.
So far, experts and scholars have developed various centrality measures from different perspectives, such as degree centrality, closeness centrality, Katz centrality, eigenvector centrality \cite{Bonacich_1987,Bonacich_2007}, and so on.
Among these, eigenvector centrality is a key centrality measure with widespread applications in areas like social network analysis \cite{Fan_2020} and the Google search engine \cite{Gleich_2015}.
Eigenvector centrality is defined as the positive eigenvector corresponding to the spectral radius of the adjacency matrix.
With the advancement of research into higher-order network structures, scholars have extended centrality measures by leveraging the positive eigenvector associated with the spectral radius of a tensor.
For example, Benson \cite{Benson_2019} developed eigenvector centralities via adjacency tensors for uniform hypergraphs.
Tudisco and Higham \cite{Tudisco_2021} studied a class of tensor's spectral centrality measures for identifying important vertices and hyperedges in hypergraphs.
Tortosa et al. \cite{Tortosa_2021} proposed an algorithm based on the PageRank concept to compute the centralities of vertices in multiplex networks via tensors.
Tudisco et al. \cite{Tudisco_2018} defined eigenvector centrality that relies on the Perron eigenvector of a multihomogeneous map on the three-order tensor representing the multiplex network.
Xu et al. \cite{Xu_2023} extended the notion of eigenvector centrality by defining the $2$-step tensor eigenvector centrality to identify important vertices.
For more tensor-based eigenvector centrality measures, please refer to \cite{Wu_2019,Zhang_2024,Bu_2025}.

In this paper, we first answer two fundamental questions about the $k$-core from a purely spectral perspective:
\begin{enumerate}
    \item Can the \textit{existence} of a $k$-core be determined by the higher-order spectra?
    \item Can the \textit{membership} of vertices in the $k$-core be identified from the higher-order spectra?
\end{enumerate}
We answer both questions affirmatively in Section 3.
Moreover, we propose the \emph{$k$-th order eigenvector centrality}, defined by the Perron vector of the $k$-adjacency tensor.
This measure not only captures a vertex's importance within the $k$-core structure but also generalizes the classical eigenvector centrality.
To compute this centrality, we develop a tensor's spectral algorithm (Algorithm \ref{alg:k_core_centrality}) that determines both the existence of the $k$-core and the importance of vertices within it.

The remainder of this paper is organized as follows.
Section 2 introduces the necessary preliminaries, including definitions and fundamental lemmas.
Our main theoretical results on the connection between the $k$-core and high-order spectra are established in Section 3, along with their detailed proofs.
Building on this, we propose the $k$-th order eigenvector centrality and present an algorithm for its computation in Section 4.
The efficacy of our method is then validated through numerical experiments on real-world networks in Section 5.
Finally, we conclude the paper with a summary and discussion in Section 6.

\section{Preliminaries}

In this section, we begin with the concept of the tensor and its spectra.
Let $\mathbb{C}^{n}$ denote the set of $n$-dimensional vectors over the complex field $\mathbb{C}$.
For a tensor $\mathcal{A} = (a_{i_{1}i_{2}\cdots i_{m}})$ of order $m$ and dimension $n$, and a vector $\mathbf{x} = (x_{1}, \ldots, x_{n})^{\mathrm{T}} \in \mathbb{C}^{n}$, the product $\mathcal{A}\mathbf{x}^{m-1}$ is a vector in $\mathbb{C}^{n}$ whose $i$-th component is given by $\sum_{i_{2},\ldots,i_{m}=1}^{n}a_{ii_{2}\cdots i_{m}}x_{i_{2}}\cdots x_{i_{m}}$.
A complex number $\lambda$ is called an \emph{eigenvalue} of $\mathcal{A}$ if there exists a nonzero vector $\mathbf{x} \in \mathbb{C}^{n}$ such that
\begin{align}\label{equ.1}
\mathcal{A} \mathbf{x}^{m-1} = \lambda \mathbf{x}^{[m-1]},
\end{align}
where $\mathbf{x}^{[m-1]} = (x_1^{m-1}, \ldots, x_n^{m-1})^{\mathrm{T}}$. The vector $\mathbf{x}$ is then called an \emph{eigenvector} of $\mathcal{A}$ corresponding to $\lambda$.
The spectral radius of $\mathcal{A}$ is the largest modulus of its eigenvalues, denoted by $\rho(\mathcal{A})$ (see \cite{Qi_2005}).
The \emph{support} of $\mathbf{x}$ is defined as $\operatorname{supp}(\mathbf{x}) = \{ i \in \{1, 2, \ldots, n\} \mid x_i \neq 0 \}$.

If all entries of a tensor $\mathcal{A}$ are nonnegative, then $\mathcal{A}$ is called a \emph{nonnegative tensor}.
The weak irreducibility of a tensor can be characterized via its associated digraph.
For an order $m$ and dimension $n$ tensor $\mathcal{A} = (a_{i_1 i_2 \cdots i_m})$, define the directed graph $G_{\mathcal{A}} = (V, E)$ with vertex set $V = \{1, 2, \dots, n\}$ and arc set
\[
E = \left\{ (i, j) \mid \exists\, \{i_2, \dots, i_m\} \text{ such that } a_{i i_2 \cdots i_m} \neq 0 \text{ and } j \in \{i_2, \dots, i_m\} \right\}.
\]
The tensor $\mathcal{A}$ is said to be \emph{weakly irreducible} if $G_{\mathcal{A}}$ is strongly connected; otherwise, $\mathcal{A}$ is \emph{weakly reducible} \cite{Perron Th.}.

The following Perron-Frobenius theorem for weakly irreducible nonnegative tensors is essential to our analysis.

\begin{lemma}{\rm \cite{Perron Th.}} \label{thm Perron Th.}
If $\mathcal{A}$ is a nonnegative weakly irreducible tensor, then its spectral radius $\rho(\mathcal{A})$ is the unique positive eigenvalue of $\mathcal{A}$, and there exists a unique positive eigenvector $\mathbf{x}$ (up to a positive scaling factor) corresponding to $\rho(\mathcal{A})$.
\end{lemma}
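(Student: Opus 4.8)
The plan is to prove the statement through the nonlinear Perron--Frobenius theory for the monotone, degree-$(m-1)$ homogeneous map $f(\mathbf{x}) = \mathcal{A}\mathbf{x}^{m-1}$, whose dependency digraph is exactly $G_{\mathcal{A}}$; weak irreducibility is precisely strong connectivity of this digraph, and that hypothesis is consumed exactly where positivity and uniqueness are forced. For existence, I would first produce a nonnegative eigenpair by normalizing $\mathbf{x}\mapsto(\mathcal{A}\mathbf{x}^{m-1})^{[1/(m-1)]}$ onto the simplex $\Delta=\{\mathbf{x}\ge 0:\sum_i x_i=1\}$ and invoking Brouwer's theorem, giving $\mathbf{x}_0\in\Delta$ and $\lambda_0\ge 0$ with $\mathcal{A}\mathbf{x}_0^{m-1}=\lambda_0\mathbf{x}_0^{[m-1]}$. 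To guarantee the eigenvalue is the spectral radius, I would run this on the strictly positive perturbations $\mathcal{A}_\varepsilon=\mathcal{A}+\varepsilon\mathcal{J}$ (with $\mathcal{J}$ the all-ones tensor), which admit strictly positive Perron pairs $(\rho(\mathcal{A}_\varepsilon),\mathbf{x}_\varepsilon)$, and pass to the limit $\varepsilon\to 0^{+}$ using continuity and monotonicity of the spectral radius. This yields $(\rho(\mathcal{A}),\mathbf{x}_0)$ with $\mathbf{x}_0\ge 0$ and $\rho(\mathcal{A})>0$, the latter because $\mathcal{A}\mathbf{1}^{m-1}>0$ since every vertex of $G_{\mathcal{A}}$ has an out-arc.

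Next, to identify the eigenvalue with $\rho(\mathcal{A})$ and bound all eigenvalues, I would use a Collatz--Wielandt comparison. Given any eigenpair $(\mu,\mathbf{y})$, taking entrywise absolute values gives $|\mu|\,|\mathbf{y}|^{[m-1]}\le \mathcal{A}|\mathbf{y}|^{m-1}$ by nonnegativity and the triangle inequality; comparing $|\mathbf{y}|$ with a strictly positive Perron vector through $t^{*}=\max_i |y_i|/x_{0,i}$ and evaluating the relation at an index attaining the maximum yields $|\mu|\le\rho(\mathcal{A})$. This both confirms $\lambda_0=\rho(\mathcal{A})$ and shows no eigenvalue exceeds it in modulus.

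The uniqueness of the positive eigenvector is where the homogeneous comparison does its real work: if $\mathbf{x}_0,\mathbf{z}>0$ both satisfy the $\rho$-eigenrelation, set $t^{*}=\max_i z_i/x_{0,i}$, attained at $k$, so that $\mathbf{w}=t^{*}\mathbf{x}_0-\mathbf{z}\ge 0$ with $w_k=0$. Writing out the $k$-th equations, the identity $(\mathcal{A}(t^{*}\mathbf{x}_0)^{m-1})_k=(\mathcal{A}\mathbf{z}^{m-1})_k$ forces every nonzero monomial $a_{ki_2\cdots i_m}$ to satisfy $\prod_\ell t^{*}x_{0,i_\ell}=\prod_\ell z_{i_\ell}$, whence $w_{i_\ell}=0$ for each index appearing; thus $w_k=0$ propagates to all out-neighbors of $k$, and strong connectivity of $G_{\mathcal{A}}$ spreads $\mathbf{w}=\mathbf{0}$ across $V$, giving $\mathbf{z}=t^{*}\mathbf{x}_0$. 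Strict positivity of the Perron vector I would reduce to showing that no proper principal subtensor $\mathcal{A}[S]$, $\emptyset\ne S\subsetneq V$, attains spectral radius $\rho(\mathcal{A})$: if $\mathbf{x}_0$ vanished off $S$, its restriction would be a positive Perron vector of $\mathcal{A}[S]$ with eigenvalue $\rho(\mathcal{A})$, and a strict-monotonicity estimate built on the arcs leaving $S$ (available by strong connectivity) contradicts $\rho(\mathcal{A}[S])=\rho(\mathcal{A})$.

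I expect the interplay of positivity and uniqueness to be the main obstacle. Unlike the matrix case, $f$ is nonlinear, so the comparison vector $t^{*}\mathbf{x}_0$ satisfies no linear eigenrelation and one must track the homogeneity degree $m-1$ carefully. Moreover, a naive ``support is closed under arcs'' argument fails here: weakly irreducible tensors do admit nonnegative eigenvectors with zero entries for non-maximal eigenvalues, so positivity of the \emph{maximal} eigenvector genuinely requires the strict subtensor spectral-radius inequality, equivalently the Hilbert projective-metric contraction underlying the nonlinear Perron--Frobenius theorem of \cite{Perron Th.}. These are precisely the points where strong connectivity of $G_{\mathcal{A}}$ is indispensable.
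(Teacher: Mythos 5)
First, a point of reference: the paper does not prove this statement at all --- it is Lemma~2.1, imported verbatim with the citation to Friedland, Gaubert and Han, so there is no in-paper proof to compare against, and your sketch must be judged against the standard literature argument. Most of it is a faithful reconstruction of that argument: existence of a nonnegative eigenpair for $\rho(\mathcal{A})$ via the positive perturbations $\mathcal{A}_\varepsilon=\mathcal{A}+\varepsilon\mathcal{J}$ plus Brouwer and a limit, the Collatz--Wielandt bound $|\mu|\le\rho(\mathcal{A})$ for arbitrary eigenpairs, and, in particular, your uniqueness argument is exactly right: from $(\mathcal{A}(t^{*}\mathbf{x}_0)^{m-1})_k=\rho\,(t^{*}x_{0,k})^{m-1}=\rho\, z_k^{m-1}=(\mathcal{A}\mathbf{z}^{m-1})_k$ together with $t^{*}\mathbf{x}_0\ge\mathbf{z}>0$, termwise equality forces every factor of every nonzero monomial in row $k$ to be tight, so the zero set of $\mathbf{w}$ is closed under out-arcs of $G_{\mathcal{A}}$ and strong connectivity spreads $\mathbf{w}=\mathbf{0}$. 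This is the correct weakly irreducible adaptation, with the homogeneity degree handled properly. (For the ``unique positive eigenvalue'' clause you would also need the two-sided min/max comparison against a positive eigenvector, a routine addition.)

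The genuine gap is strict positivity of the Perron vector. Your reduction to ``no proper principal subtensor $\mathcal{A}[S]$ attains $\rho(\mathcal{A})$'' is fine as a reduction, but the advertised ``strict-monotonicity estimate built on the arcs leaving $S$'' is never supplied, and the obvious attempt fails: extending a positive eigenvector of $\mathcal{A}[S]$ by $\epsilon$ off $S$ and applying the min-form Collatz--Wielandt bound yields only $\rho(\mathcal{A})\ge\rho(\mathcal{A}[S])$, because rows of $S$ with no arc leaving $S$ pin the minimum ratio at exactly $\rho(\mathcal{A}[S])$; the $\epsilon$-gain at a boundary row never moves the minimum, and unlike the matrix case one cannot linearly iterate the gain through the graph. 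Worse, the standard proofs of the strict inequality $\rho(\mathcal{A}[S])<\rho(\mathcal{A})$ for weakly irreducible tensors themselves invoke the positive Perron eigenvector of the full tensor --- i.e., the very theorem under proof --- so as written this step is circular, and your closing appeal to ``the Hilbert projective-metric contraction underlying the nonlinear Perron--Frobenius theorem of the cited paper'' is an appeal to the cited result rather than a proof of it. Non-circular repairs exist (Friedland--Gaubert--Han obtain an interior eigenvector directly from the Gaubert--Gunawardena/Nussbaum theory of order-preserving homogeneous maps with strongly connected dependency graph; Yang and Yang give a combinatorial argument), but one of them must actually be carried out. A second, smaller ordering flaw: your Collatz--Wielandt step compares $|\mathbf{y}|$ against ``a strictly positive Perron vector'' before positivity has been established; this is repairable by comparing against the positive eigenvector of $\mathcal{A}_\varepsilon$ and letting $\varepsilon\to 0^{+}$, or by proving positivity first, as the cited source does.
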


We now introduce the $k$-adjacency tensor, a generalization of the adjacency matrix of graphs.
For a vertex $i\in V(G)$, the neighborhood of $i$ is the set of all vertices adjacent to $i$, denoted by $N_{G}(i)$.

\begin{definition}\textup{\cite{Liu_2023}}
Let $G$ be a simple graph with $n$ vertices.
The $k$-adjacency tensor $\mathcal{A}^{(k)}(G) = (a_{i_1 i_2 \cdots i_{k+1}})$ is a $(k+1)$-order and $n$-dimension tensor defined by
\[
a_{i_1 i_2 \cdots i_{k+1}} =
\begin{cases}
\dfrac{1}{k!}, & \text{if } \{i_2, \ldots, i_{k+1}\} \subseteq N_G(i_1) \text{ and all indices are distinct}, \\
0, & \text{otherwise}.
\end{cases}
\]
The spectral radius $\rho_k(G)$ is the largest modulus among all eigenvalues of $\mathcal{A}^{(k)}(G)$.
The Perron eigenvector $\mathbf{x}^{(k)}$ is a nonnegative eigenvector corresponding to $\rho_k(G)$.
\end{definition}
When $k=1$, the $1$-adjacency tensor reduces to the standard adjacency matrix of $G$, $\rho_1(G)$ is the usual spectral radius of the graph, and the Perron eigenvector $\mathbf{x}^{(1)}$ corresponds to the classical eigenvector centrality.

\begin{lemma}\textup{\cite{Liu_2023}}\label{lem_1}
Let $\delta(G)$ be the minimum degree of a graph $G$.
The $k$-adjacency tensor $\mathcal{A}^{(k)}(G)$ of $G$ is weakly irreducible if and only if $G$ is connected and $\delta(G)\geq k$.
\end{lemma}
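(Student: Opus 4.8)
The plan is to work entirely with the associated digraph $G_{\mathcal{A}}$ of $\mathcal{A} = \mathcal{A}^{(k)}(G)$, since by definition weak irreducibility of $\mathcal{A}$ is exactly strong connectivity of $G_{\mathcal{A}}$. The first and central step will be to pin down precisely which arcs occur in $G_{\mathcal{A}}$; I expect to show that $(i,j)$ is an arc of $G_{\mathcal{A}}$ if and only if $j \in N_G(i)$ and $|N_G(i)| \geq k$. With this arc description in hand, both directions of the equivalence become short.

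To establish the arc characterization, I would unwind the definition of the arc set: $(i,j)$ is an arc exactly when some nonzero entry $a_{i i_2 \cdots i_{k+1}}$ has $j$ among the indices $i_2, \ldots, i_{k+1}$. By the definition of $\mathcal{A}^{(k)}(G)$, such an entry is nonzero if and only if $\{i_2, \ldots, i_{k+1}\}$ is a set of $k$ pairwise distinct neighbors of $i$ (automatically distinct from $i$ itself, since $i \notin N_G(i)$). Hence the arc $(i,j)$ exists iff $j$ is a neighbor of $i$ that can be extended to a set of $k$ distinct neighbors of $i$; as the remaining $k-1$ indices are then chosen from $N_G(i) \setminus \{j\}$, this extension is possible precisely when $|N_G(i)| - 1 \geq k - 1$, i.e.\ $|N_G(i)| \geq k$.

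For the direction $(\Leftarrow)$, assuming $G$ is connected with $\delta(G) \geq k$, every vertex satisfies $|N_G(i)| \geq k$, so by the claim each edge $\{i,j\}$ of $G$ produces both arcs $(i,j)$ and $(j,i)$ in $G_{\mathcal{A}}$. Thus $G_{\mathcal{A}}$ is the symmetric orientation of $G$, which is strongly connected exactly because $G$ is connected. For $(\Rightarrow)$, I would exploit two consequences of strong connectivity: every arc of $G_{\mathcal{A}}$ projects onto an edge of $G$, so a directed path from $u$ to $v$ in $G_{\mathcal{A}}$ yields a walk from $u$ to $v$ in $G$, forcing $G$ to be connected; and every vertex must possess an outgoing arc, which by the claim forces $|N_G(i)| \geq k$ at each $i$, giving $\delta(G) \geq k$.

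I expect the only genuine friction to be the combinatorial extension step inside the arc characterization—confirming that a prescribed neighbor $j$ completes to a $k$-element subset of $N_G(i)$ exactly when $|N_G(i)| \geq k$—together with the care that the nonzero-entry condition concerns $k$ \emph{distinct} neighbors. The degenerate single-vertex graph need not be treated separately, since $\delta(G) \geq k \geq 1$ already forces $n \geq 2$.
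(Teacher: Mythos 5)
The paper does not prove this lemma; it is quoted verbatim from the cited reference \cite{Liu_2023}, so there is no in-paper argument to compare against. Your proof is correct and is the natural one: the arc characterization is exactly right, since a nonzero entry $a_{i i_2 \cdots i_{k+1}}$ corresponds to a $k$-element subset of $N_G(i)$ (the distinctness from $i_1$ being automatic in a simple graph, as you note), and a prescribed neighbor $j$ extends to such a subset precisely when $|N_G(i)| - 1 \geq k-1$; with that in hand, $G_{\mathcal{A}}$ is the symmetric orientation of $G$ exactly when all degrees are at least $k$, and both implications follow as you say. One small caveat: your closing remark about the single-vertex graph disposes of the degenerate case only in the $(\Leftarrow)$ direction, whereas in $(\Rightarrow)$ the step ``every vertex has an outgoing arc'' presupposes $n \geq 2$ (for $n = 1$ the tensor is zero and the one-vertex digraph is conventionally strongly connected, so the lemma implicitly excludes this trivial case); this is a convention issue, not a gap in the substance of your argument.
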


The following lemma establishes a fundamental connection between the spectral radius of the $k$-adjacency tensor and the existence of dense subgraphs, which is crucial for our main results.

\begin{lemma}\textup{\cite{Liu_2023}}\label{lem_2}
Let $G$ be a graph and $k \geq 1$ an integer.
Then $\rho_k(G) \geq 1$ if and only if there exists a subgraph $H \subseteq G$ with $\delta(H) \geq k$.
\end{lemma}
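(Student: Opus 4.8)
The plan is to prove both implications by translating the eigen-equation $\mathcal{A}^{(k)}(G)\mathbf{x}^{k}=\lambda\mathbf{x}^{[k]}$ into a statement about elementary symmetric polynomials on neighborhoods. The first step, used throughout, is to simplify the $i$-th coordinate of $\mathcal{A}^{(k)}(G)\mathbf{x}^{k}$. Because each nonzero entry $a_{i i_2\cdots i_{k+1}}=1/k!$ corresponds to an ordered $k$-tuple of distinct vertices in $N_G(i)$, and each $k$-subset $T\subseteq N_G(i)$ yields exactly $k!$ such ordered tuples, the normalizing factor $1/k!$ collapses the sum to an elementary symmetric polynomial: $(\mathcal{A}^{(k)}(G)\mathbf{x}^{k})_i=\sum_{T\subseteq N_G(i),\,|T|=k}\prod_{j\in T}x_j=:e_k\bigl(\{x_j:j\in N_G(i)\}\bigr)$. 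Thus the eigen-equation at vertex $i$ reads $\lambda x_i^{k}=e_k(\{x_j:j\in N_G(i)\})$, and in particular $(\mathcal{A}^{(k)}(G)\mathbf{1}^{k})_i=\binom{\deg_G(i)}{k}$. I will repeatedly use that $e_k$ is nondecreasing in each of its nonnegative arguments.

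For the direction ($\Leftarrow$), assume $H\subseteq G$ satisfies $\delta(H)\ge k$. First I would pass to a connected component $H'$ of $H$; since every neighbor of a vertex lies in its own component, $\delta(H')\ge k$ still holds, and by Lemma \ref{lem_1} the tensor $\mathcal{A}^{(k)}(H')$ is weakly irreducible. Lemma \ref{thm Perron Th.} then supplies a positive eigenvector $\mathbf{y}>0$ with $\mathcal{A}^{(k)}(H')\mathbf{y}^{k}=\rho_k(H')\mathbf{y}^{[k]}$. Evaluating this at a vertex $\mu$ with $y_\mu=\min_i y_i>0$ and using that $\deg_{H'}(\mu)\ge k$ together with monotonicity of $e_k$, I get $\rho_k(H')y_\mu^{k}=e_k(\{y_j:j\in N_{H'}(\mu)\})\ge\binom{\deg_{H'}(\mu)}{k}y_\mu^{k}\ge y_\mu^{k}$, hence $\rho_k(H')\ge 1$. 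Finally, since $H'\subseteq G$ gives $\mathcal{A}^{(k)}(H')\le\mathcal{A}^{(k)}(G)$ entrywise, monotonicity of the spectral radius for nonnegative tensors yields $\rho_k(G)\ge\rho_k(H')\ge 1$.

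For the direction ($\Rightarrow$), assume $\rho_k(G)\ge 1$ and let $\mathbf{x}^{(k)}\ge 0$ be a Perron eigenvector, with nonempty support $S=\operatorname{supp}(\mathbf{x}^{(k)})$. For each $i\in S$ the eigen-equation reads $\rho_k(G)x_i^{k}=e_k(\{x_j:j\in N_G(i)\})$; the left-hand side is strictly positive because $\rho_k(G)\ge 1$ and $x_i>0$, so the right-hand side must contain a surviving monomial $\prod_{j\in T}x_j>0$ for some $k$-subset $T\subseteq N_G(i)$. Every such $j$ satisfies $x_j>0$, i.e.\ $T\subseteq S$, so $|N_G(i)\cap S|\ge k$. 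Since $i\in S$ was arbitrary, the induced subgraph $H:=G[S]$ satisfies $\deg_H(i)=|N_G(i)\cap S|\ge k$ for all its vertices, i.e.\ $\delta(H)\ge k$, which is the desired subgraph.

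I expect the main obstacle to be two points of rigor rather than a single hard step. In the ($\Leftarrow$) direction, the subgraph monotonicity $\rho_k(H')\le\rho_k(G)$ must be justified from entrywise domination of nonnegative tensors rather than taken for granted. In the ($\Rightarrow$) direction, the delicate point is that $\mathcal{A}^{(k)}(G)$ need not be weakly irreducible, so Lemma \ref{thm Perron Th.} does not directly apply; I must instead invoke the general Perron--Frobenius theorem for (possibly reducible) nonnegative tensors to guarantee the existence of a nonnegative eigenvector $\mathbf{x}^{(k)}$ for $\rho_k(G)$. Once that existence is secured, the support argument is immediate, and care is only needed to confirm the combinatorial reduction to $e_k$, namely the cancellation of the $1/k!$ factor against the $k!$ orderings of each $k$-subset.
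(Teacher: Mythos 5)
Your proposal is correct, but note that this paper never proves Lemma~\ref{lem_2} at all: it is imported verbatim from \cite{Liu_2023}, so there is no in-paper proof to compare against, and what you have written is a genuine reconstruction. Judged on its own merits, the argument is sound. The reduction of $(\mathcal{A}^{(k)}(G)\mathbf{x}^{k})_i$ to the elementary symmetric polynomial $e_k$ over $N_G(i)$ (the $1/k!$ cancelling the $k!$ orderings of each $k$-subset) is exactly right, and it is the same mechanism this paper itself exploits in the proof of Theorem~\ref{thm:localization}; indeed your ($\Rightarrow$) support argument --- positivity of $\rho_k(G)x_i^k$ forces a surviving monomial, hence $k$ neighbors inside $\operatorname{supp}(\mathbf{x})$, so $G[\operatorname{supp}(\mathbf{x})]$ has minimum degree at least $k$ --- is the natural converse of the peeling argument in that theorem. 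Your two flagged gaps are real but standard: the existence of a nonnegative eigenpair for $\rho$ of a general (possibly weakly reducible) nonnegative tensor and the monotonicity $\rho(\mathcal{B})\leq\rho(\mathcal{A})$ for $0\leq\mathcal{B}\leq\mathcal{A}$ (including principal subtensors) are both due to Yang and Yang's extension of the Perron--Frobenius theory, and neither is supplied by Lemma~\ref{thm Perron Th.}, which covers only the weakly irreducible case; since you correctly pass to a connected component $H'$ with $\delta(H')\geq k$ before invoking Lemma~\ref{lem_1}, that direction is otherwise airtight. One streamlining worth noting: in the ($\Leftarrow$) direction you can avoid the subtensor-monotonicity citation entirely by extending the positive eigenvector $\mathbf{y}$ of $\mathcal{A}^{(k)}(H')$ by zeros to a vector $\mathbf{z}$ on $V(G)$, checking $\mathcal{A}^{(k)}(G)\mathbf{z}^{k}\geq\rho_k(H')\,\mathbf{z}^{[k]}$ coordinatewise (the inequality is trivial off $V(H')$ and follows from $N_{H'}(i)\subseteq N_G(i)$ and monotonicity of $e_k$ on $V(H')$), and then applying the Collatz--Wielandt inequality for nonnegative tensors to get $\rho_k(G)\geq\rho_k(H')\geq 1$ directly. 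As a final incidental remark, your two directions combine to show the dichotomy $\rho_k(G)=0$ or $\rho_k(G)\geq 1$, which is consistent with the paper's example where $\rho_3(G)=1$ and $\rho_4(G)=0$.
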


\section{Main results}

In this section, we present our main theoretical results, which establish a spectral characterization of the $k$-core.
First, we provide a spectral criterion for the existence of a $k$-core.
Then, we demonstrate how the Perron eigenvector of the $k$-adjacency tensor localizes the $k$-core.

\begin{theorem}\label{thm:existence}
Let $G$ be a graph and let $k \geq 1$ be an integer. Then $G$ has a $k$-core if and only if $\rho_k(G) \geq 1$.
\end{theorem}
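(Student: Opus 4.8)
The plan is to deduce the result almost directly from Lemma \ref{lem_2}, once the well-definedness of the $k$-core has been pinned down. The key reduction I would make is the observation that $G$ has a non-empty $k$-core if and only if $G$ contains at least one non-empty subgraph $H$ with $\delta(H) \geq k$. Granting this, Lemma \ref{lem_2} immediately supplies the spectral equivalence, since it asserts precisely that such a subgraph exists if and only if $\rho_k(G) \geq 1$.

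First I would verify that the $k$-core is well-defined as the unique maximal subgraph of minimum degree at least $k$. The crux here is that the family of subgraphs with $\delta \geq k$ is closed under unions: if $H_1$ and $H_2$ both satisfy $\delta \geq k$, then for any vertex $v$ of $H_1 \cup H_2$, say $v \in V(H_1)$, one has $\deg_{H_1 \cup H_2}(v) \geq \deg_{H_1}(v) \geq k$, and symmetrically if $v \in V(H_2)$. Hence the union of all subgraphs with $\delta \geq k$ is itself such a subgraph and is the unique maximal one, which we identify as the $k$-core. In particular, this maximal subgraph is non-empty precisely when at least one non-empty subgraph with $\delta \geq k$ exists.

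With this in hand, the two directions are short. For the forward direction, if $G$ has a non-empty $k$-core $C$, then $C$ is a subgraph with $\delta(C) \geq k$, so Lemma \ref{lem_2} gives $\rho_k(G) \geq 1$. For the reverse direction, if $\rho_k(G) \geq 1$, then Lemma \ref{lem_2} yields a subgraph $H \subseteq G$ with $\delta(H) \geq k$, which is necessarily non-empty since $k \geq 1$; because the $k$-core is the maximal such subgraph, it contains $H$ and is therefore non-empty.

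The main obstacle is not the spectral content at all, which is delivered wholesale by Lemma \ref{lem_2}, but rather the bookkeeping needed to reconcile the definition of the $k$-core as a maximal object with the existential statement of the lemma. Concretely, I would take care to confirm that the witness subgraph produced by the lemma is non-empty and that maximality forces the $k$-core to inherit this non-emptiness; both points rest on the union-closure property above, which is classical but worth recording explicitly.
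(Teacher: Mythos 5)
Your proposal is correct and takes essentially the same route as the paper: both directions reduce immediately to Lemma \ref{lem_2}, with the paper phrasing the reverse direction as a contrapositive while you argue it directly via maximality. Your explicit verification that subgraphs with $\delta \geq k$ are closed under unions (so the $k$-core is well-defined and inherits non-emptiness from the witness $H$) is a detail the paper leaves implicit, but it does not change the argument.
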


\begin{proof}

\noindent\textbf{($\Rightarrow$)}
Suppose $G$ has a $k$-core $H$.
Then $H$ is a subgraph of $G$ with minimum degree $\delta(H) \geq k$.
By Lemma \ref{lem_2}, the existence of such a subgraph implies $\rho_k(G) \geq 1$.

\medskip

\noindent\textbf{($\Leftarrow$)}
We prove the contrapositive: if $G$ has no $k$-core, then $\rho_k(G) < 1$.

Assume $G$ has no $k$-core.
By definition, this means there exists no subgraph $H^{\prime}$ of $G$ with $\delta(H^{\prime}) \geq k$.
Equivalently, for every subgraph $H^{\prime} \subseteq G$, we have $\delta(H^{\prime}) < k$.
By Lemma \ref{lem_2}, this implies $\rho_k(G) < 1$.
Since the contrapositive holds, the original implication is valid.
\end{proof}

\begin{theorem}\label{thm:localization}
Let $G$ be a graph with a $k$-core, and let $\mathbf{x}^{(k)} = (x_1^{(k)}, \ldots, x_n^{(k)})^T$ be the Perron eigenvector of $\mathcal{A}^{(k)}(G)$.
Let $V_{\text{$k$-core}}$ be the set of vertices in the $k$-core of $G$.
Then $\operatorname{supp}(\mathbf{x}^{(k)})\subseteq V_{\text{$k$-core}}$.
Moreover, if the subgraph induced by $V_{\text{$k$-core}}$ is connected, we have $\operatorname{supp}(\mathbf{x}^{(k)}) = V_{\text{$k$-core}}$.
\end{theorem}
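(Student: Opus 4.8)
The plan is to read off the eigenvalue equation coordinatewise and translate the positivity of entries into a combinatorial degree condition. Expanding the $i$-th coordinate of $\mathcal{A}^{(k)}(G)\bigl(\mathbf{x}^{(k)}\bigr)^{k}=\rho_k(G)\bigl(\mathbf{x}^{(k)}\bigr)^{[k]}$ and using the definition of the $k$-adjacency tensor (the factor $1/k!$ exactly cancels the $k!$ orderings of each subset), the equation at vertex $i$ becomes
\begin{equation*}
\sum_{\{j_1,\dots,j_k\}\subseteq N_G(i)} x_{j_1}^{(k)}\cdots x_{j_k}^{(k)} = \rho_k(G)\,\bigl(x_i^{(k)}\bigr)^{k},
\end{equation*}
an elementary-symmetric sum over the $k$-subsets of $N_G(i)$. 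Since $G$ has a $k$-core, Theorem \ref{thm:existence} gives $\rho_k(G)\ge 1>0$. For the inclusion, set $S=\operatorname{supp}(\mathbf{x}^{(k)})$ and fix $i\in S$; then the right-hand side is strictly positive, so some $k$-subset $\{j_1,\dots,j_k\}\subseteq N_G(i)$ has all $x_{j_\ell}^{(k)}>0$, i.e.\ $i$ has at least $k$ neighbours in $S$. Hence $G[S]$ has minimum degree at least $k$. Because the $k$-core is the unique maximal subgraph of minimum degree at least $k$ (equivalently the union of all subgraphs with this property, a union which again has minimum degree at least $k$), every such subgraph lies inside it, giving $S\subseteq V_{\text{$k$-core}}$.

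For the equality under connectivity, write $H=G[V_{\text{$k$-core}}]$. By hypothesis $H$ is connected with $\delta(H)\ge k$, so Lemma \ref{lem_1} makes $\mathcal{A}^{(k)}(H)$ weakly irreducible, and one checks from the definition that $\mathcal{A}^{(k)}(H)$ is precisely the principal subtensor of $\mathcal{A}^{(k)}(G)$ indexed by $V_{\text{$k$-core}}$. Using $S\subseteq V_{\text{$k$-core}}$, for $i\in V_{\text{$k$-core}}$ every nonzero term of the displayed equation involves only neighbours inside $V_{\text{$k$-core}}$, so the restriction $\mathbf{y}=\mathbf{x}^{(k)}|_{V_{\text{$k$-core}}}$ is a nonzero nonnegative eigenvector of $\mathcal{A}^{(k)}(H)$ for the eigenvalue $\rho_k(G)$. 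I would then pin down this eigenvalue by a squeeze: monotonicity of the spectral radius under principal subtensors gives $\rho_k(H)=\rho(\mathcal{A}^{(k)}(H))\le\rho_k(G)$, while $\rho_k(G)$ being an eigenvalue of $\mathcal{A}^{(k)}(H)$ forces $\rho_k(G)\le\rho(\mathcal{A}^{(k)}(H))$; hence $\rho_k(G)=\rho_k(H)=\rho(\mathcal{A}^{(k)}(H))$.

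It remains to show $\mathbf{y}$ has full support, and this is the main obstacle. Weak irreducibility by itself does \emph{not} forbid nonnegative eigenvectors with zero entries: for two triangles glued at a vertex, with $k=2$, the vector equal to $1$ on one triangle and $0$ elsewhere is a nonnegative eigenvector at eigenvalue $1$. What rescues the argument is that $\mathbf{y}$ lives at the \emph{top} eigenvalue $\rho(\mathcal{A}^{(k)}(H))$. Concretely, if $T=\operatorname{supp}(\mathbf{y})$ were a proper subset of $V_{\text{$k$-core}}$, then $\mathbf{y}|_{T}$ would be a strictly positive eigenvector of the principal subtensor $\mathcal{A}^{(k)}(H)[T]$, so its eigenvalue equals $\rho(\mathcal{A}^{(k)}(H)[T])$, forcing $\rho(\mathcal{A}^{(k)}(H)[T])=\rho(\mathcal{A}^{(k)}(H))$. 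The plan is to contradict this via strict monotonicity of the spectral radius under proper principal subtensors of a weakly irreducible nonnegative tensor—the precise point at which the connectivity of $V_{\text{$k$-core}}$ is indispensable (a disconnected core could concentrate the Perron vector on its densest component). Once $T=V_{\text{$k$-core}}$ is established, Lemma \ref{thm Perron Th.} identifies $\mathbf{y}$ up to scaling with the positive Perron eigenvector, yielding $\operatorname{supp}(\mathbf{x}^{(k)})=S=V_{\text{$k$-core}}$.
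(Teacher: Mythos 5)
Your proof is correct, and it differs from the paper's in ways worth recording. For the inclusion $\operatorname{supp}(\mathbf{x}^{(k)})\subseteq V_{\text{$k$-core}}$, the paper runs the peeling process forward: it defines the deletion rounds $S_0,\ldots,S_m$ and shows by induction on rounds that every peeled vertex has a zero entry. You argue in the opposite direction, in one step: since $\rho_k(G)\ge 1>0$, every $i\in\operatorname{supp}(\mathbf{x}^{(k)})$ must see a $k$-subset of neighbours with all entries positive, so the support induces a subgraph of minimum degree at least $k$, which by maximality of the $k$-core (the union argument you give) lies inside it. Your version is shorter and dispenses with the peeling bookkeeping; the paper's buys an explicit picture of how zeros propagate through the pruning. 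For the equality, both proofs follow the same skeleton---restrict $\mathbf{x}^{(k)}$ to $H=G[V_{\text{$k$-core}}]$, use that $H$ is induced to see the restriction is an eigenvector of $\mathcal{A}^{(k)}(H)$, and squeeze $\rho_k(G)=\rho_k(H)$---but the final step genuinely diverges. The paper concludes directly that the restriction ``is the Perron eigenvector, hence positive'' by weak irreducibility and Lemma \ref{thm Perron Th.}; strictly read, that lemma only guarantees existence and uniqueness of a \emph{positive} eigenvector at $\rho$, not that every \emph{nonnegative} eigenvector at $\rho$ is positive, and your bowtie example (two triangles sharing a vertex, $k=2$, eigenvalue $1$ with eigenvector supported on one triangle) correctly demonstrates that weak irreducibility alone does not exclude nonnegative eigenvectors with zeros at lower eigenvalues. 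Your repair---if the support $T$ of the restriction were proper, then restricting further would give a positive eigenvector of the principal subtensor at eigenvalue $\rho(\mathcal{A}^{(k)}(H))$, contradicting strict monotonicity of the spectral radius under proper principal subtensors of a weakly irreducible nonnegative tensor---is sound; both ingredients (a positive eigenpair of a nonnegative tensor has eigenvalue equal to the spectral radius, and the strict monotonicity) are standard in the tensor Perron--Frobenius literature, though neither appears in the paper, so in a final write-up you should cite them explicitly rather than leave the monotonicity step at the level of a plan. Net assessment: your argument matches the paper's structure for the equality but is more careful exactly where the paper is glossy, and gives a cleaner alternative for the inclusion.
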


\begin{proof}

Let $\mathbf{x} = \mathbf{x}^{(k)}$ be the Perron eigenvector of $\mathcal{A}^{(k)}(G)$ corresponding to $\rho_k(G) > 0$.
We want to show that $\text{supp}(\mathbf{x}) \subseteq V_{\text{$k$-core}}$, i.e., if $x_v \neq 0$ then $v \in V_{\text{$k$-core}}$.
We prove the contrapositive: if $v \notin V_{\text{$k$-core}}$, then $x_v = 0$.
Let $v \notin V_{\text{$k$-core}}$. By the definition of $k$-core through the peeling process, there exists a sequence of vertex removals:
\[
S_0, S_1, \ldots, S_m
\]
such that
\begin{itemize}
    \item $S_0 = \{ u \in V(G) : d_G(u) < k \}$
    \item $S_{i} = \{ u \in V(G) \setminus \bigcup_{j=0}^{i-1} S_j : d_{G_i}(u) < k \}$ where $G_i$ is the graph after removing $\bigcup_{j=0}^{i-1} S_j$
    \item $v \in S_p$ for some $m \geq p \geq 0$
    \item $V_{\text{$k$-core}} = V(G) \setminus \bigcup_{j=0}^m S_j$
\end{itemize}
We prove by induction on $i$ that for all $u \in S_i$, $x_u = 0$.

\textbf{Base case ($i = 0$):} For any $u \in S_0$, we have $d_G(u) < k$.
By the equation (\ref{equ.1})
\[
\rho_k(G) x_u^k = \sum_{\{u_2, \ldots, u_{k+1}\} \subseteq N_G(u)} x_{u_2} \cdots x_{u_{k+1}}.
\]
Since $d_G(u) < k$, the set $\{u_2, \ldots, u_{k+1}\} \subseteq N_G(u)$ is empty. Therefore:
\[
\rho_k(G) x_u^k = 0.
\]
Since $G$ has a $k$-core, Theorem \ref{thm:existence} implies $\rho_k(G) > 0$ and thus $x_u = 0$.

\textbf{Inductive step:} Assume that for all $u \in \bigcup_{j=0}^{i-1} S_j$, we have $x_u = 0$. Let $v \in S_i$.
Then $d_{G_i}(v) < k$.
Consider the eigenvector equation for $v$ in the original graph $G$,
\[
\rho_k(G) x_v^k = \sum_{\{v_2, \ldots, v_{k+1}\} \subseteq N_G(v)} x_{v_2} \cdots x_{v_{k+1}}.
\]
We analyze the terms in this summation. Each term corresponds to a $k$-element subset of $N_G(v)$.
There are two types of such subsets:

(1). \textbf{Subsets containing at least one vertex from $\bigcup_{j=0}^{i-1} S_j$}.
For these subsets, at least one $x_{v_j} = 0$ (by induction hypothesis), so the product $x_{v_2} \cdots x_{v_{k+1}} = 0$.

(2). \textbf{Subsets entirely contained in $V(G) \setminus \bigcup_{j=0}^{i-1} S_j$}.
These correspond to neighbors of $v$ that remain in $G_i$.
However, since $d_{G_i}(v) < k$, there are fewer than $k$ such neighbors.
Therefore, no $k$-element subset of $N_G(v)$ can be entirely contained in $V(G_i)$.

Hence, all terms in the summation are zero, and we have
\[
\rho_k(G) x_v^k = 0.
\]
Since $\rho_k(G) > 0$, we conclude $x_v = 0$.
By induction, for all $v \in \bigcup_{j=0}^m S_j$ (i.e., for all $v \notin V_{\text{$k$-core}}$), we have $x_v = 0$. Therefore:
\[
\text{supp}(\mathbf{x}) \subseteq V(G) \setminus \left( \bigcup_{j=0}^m S_j \right) = V_{\text{$k$-core}}.
\]
This completes the proof that $\text{supp}(\mathbf{x}^{(k)}) \subseteq V_{\text{$k$-core}}$.

We now prove that $V_{\text{$k$-core}} \subseteq \operatorname{supp}(\mathbf{x}^{(k)})$ under the assumption that the subgraph $H$ induced by $V_{\text{$k$-core}}$ is connected.
That is, we will show $x_v > 0$ for all $v \in V(H)$.
Let $\mathbf{x} = \mathbf{x}^{(k)}$ be the Perron eigenvector of $\mathcal{A}^{(k)}(G)$ corresponding to $\rho_k(G) > 0$. From the first part of the proof, we have $\operatorname{supp}(\mathbf{x}) \subseteq V(H)$.

Consider the $k$-adjacency tensor $\mathcal{A}^{(k)}(H)$ of the induced subgraph $H$.
Since $H$ is the $k$-core, $\delta(H) \geq k$, and by assumption $H$ is connected, Lemma \ref{lem_1} implies that $\mathcal{A}^{(k)}(H)$ is weakly irreducible.
By the Perron-Frobenius theorem (Lemma \ref{thm Perron Th.}), $\mathcal{A}^{(k)}(H)$ has a unique positive eigenvector $\mathbf{y} = (y_v)_{v \in V(H)}$ (up to scaling) corresponding to $\rho_k(H) > 0$.

The key step is to show that the restriction $\mathbf{x}|_{H}$ is a positive eigenvector of $\mathcal{A}^{(k)}(H)$.
For any vertex $v \in V(H)$, the eigenvector equation in $G$ is
\begin{equation}
\rho_k(G) x_v^{k} = \sum_{\{v_2, \ldots, v_{k+1}\} \subseteq N_G(v)} x_{v_2} \cdots x_{v_{k+1}}.
\label{eq:original}
\end{equation}

We analyze the summation in equation (\ref{eq:original}).
Let $T = \{v_2, \ldots, v_{k+1}\}$ be a $k$-element subset of $N_G(v)$.

\begin{itemize}
\item \textbf{Case 1:} If $T \not\subseteq V(H)$, then there exists some $u \in T$ with $u \notin V(H)$. Since $\operatorname{supp}(\mathbf{x}) \subseteq V(H)$, we have $x_u = 0$, and consequently $x_{v_2} \cdots x_{v_{k+1}} = 0$.

\item \textbf{Case 2:} If $T \subseteq V(H)$, then since $H$ is an \emph{induced} subgraph of $G$ and $v \in V(H)$, we have:
\[
T \subseteq N_G(v) \cap V(H) = N_H(v).
\]
\end{itemize}
Hence, all terms in equation (\ref{eq:original}) with $T \not\subseteq V(H)$ vanish, and we obtain
\begin{equation}
\rho_k(G) x_v^{k} = \sum_{\{v_2, \ldots, v_{k+1}\} \subseteq N_H(v)} x_{v_2} \cdots x_{v_{k+1}} \quad \text{for all } v \in V(H).
\label{eq:reduced}
\end{equation}
Equation (\ref{eq:reduced}) is precisely the eigenvector equation for the tensor $\mathcal{A}^{(k)}(H)$:
\[
\lambda y_v^{k} = \sum_{\{v_2, \ldots, v_{k+1}\} \subseteq N_H(v)} y_{v_2} \cdots y_{v_{k+1}},
\]
with $\lambda = \rho_k(G)$ and $y_v = x_v$ for $v \in V(H)$. This confirms that $\mathbf{x}|_{H}$ is an eigenvector of $\mathcal{A}^{(k)}(H)$ with eigenvalue $\rho_k(G)$.
We now establish that $\rho_k(G) = \rho_k(H)$.
Since $H$ is a subgraph of $G$, it follows that $\rho_k(H) \leq \rho_k(G)$.
Moreover, since $\mathbf{x}|_{H}$ is an eigenvector of $\mathcal{A}^{(k)}(H)$ with eigenvalue $\rho_k(G)$, the definition of the spectral radius implies $\rho_k(G) \leq \rho_k(H)$.
Therefore, we conclude that $\rho_k(G) = \rho_k(H)$.

Consequently, $\mathbf{x}|_{H}$ is the Perron eigenvector of $\mathcal{A}^{(k)}(H)$. Since $\mathcal{A}^{(k)}(H)$ is weakly irreducible, its Perron eigenvector is unique and strictly positive. Hence, $x_v > 0$ for every $v \in V(H)$, which completes the proof.
\end{proof}

\section{$k$-th Order Eigenvector Centrality: Definition and Algorithm}

Building upon the spectral characterization of the $k$-core established in Theorems \ref{thm:existence} and \ref{thm:localization}, we now introduce a novel centrality measure that naturally emerges from our theoretical framework.
The Perron vector $\mathbf{x}^{(k)}$ of the $k$-adjacency tensor not only identifies the $k$-core but also provides a measure of vertex importance within it.
This leads us to the following definition.

\begin{definition}
For a graph $G$ and an integer $k \geq 1$, the \emph{$k$-th order eigenvector centrality} of a vertex $i$ is defined as the $i$-th component $x_i^{(k)}$ of the Perron vector of $\mathcal{A}^{(k)}(G)$.
\end{definition}

This centrality measure possesses several properties that make it suitable for analyzing core-periphery structures:

\begin{enumerate}
\item \textbf{Core-Restricted:}
It automatically assigns positive centrality scores exclusively to vertices within the $k$-core, effectively filtering out peripheral vertices.
This provides a principled and automatic focus on the structurally central part of the network relevant to the chosen order $k$.
\item \textbf{Higher-Order Influence:}
This measure assesses vertex importance through participation in higher-order structures like $k$-cliques and other dense $k$-connected subgraphs, capturing influence beyond a vertex's local neighborhood.
\end{enumerate}

Leveraging these properties and our theoretical results, we propose Algorithm \ref{alg:k_core_centrality} for simultaneous $k$-core identification and centrality computation.
This algorithm provides a unified spectral approach to uncovering both the existence and the internal structure of the $k$-core.
Algorithm \ref{alg:k_core_centrality} involves computing the spectral radius and the corresponding eigenvector of a tensor.
Those interested in this topic may refer to \cite{Ng_2010,Liu_2010,Zhou_2013}.

\begin{algorithm}[htbp]
    \caption{$k$-core identification and $k$-th order eigenvector centrality calculation}
    \label{alg:k_core_centrality}
    \begin{algorithmic}
    \setlength{\itemsep}{3pt}
        \State \textbf{Inputs:} A graph $G = (V, E)$, an integer $k \geq 1$
        \State \textbf{Outputs:}
        \State \qquad The $k$-core existence indicator of $G$ and the $k$-th order eigenvector centrality $c(i)$ of vertice $i$ in $V$.

        \State \textbf{Step 1:} Construct the $k$-adjacency tensor $\mathcal{A}^{(k)}$ from the given graph $G$.
        \State \textbf{Step 2:} Choose an initial $n$-dimension vector $\mathbf{x}_{(0)}>0$. Let $\mathcal{B} = \mathcal{A}^{(k)} + \mathcal{I}$ and $\mathbf{y}_{(0)}=\mathcal{B} (\mathbf{x}_{(0)})^{k}$. Set $m=1$.
        \State \textbf{Step 3:} Compute
 
\begin{equation*}
 \begin{split}
 \mathbf{x}_{(m)}&=\frac{\mathbf{y}_{(m-1)}^{[\frac{1}{k}]}}{\parallel \mathbf{y}_{(m-1)}^{[\frac{1}{k}]} \parallel}, \mathbf{y}_{(m)}=\mathcal{B} (\mathbf{x}_{(m)})^{k}\\
 \underline{\lambda}_m&=\min_{x_{(m)_i}>0}\frac{y_{(m)_i}}{(x_{(m)_i})^{k}}, \overline{\lambda}_m=\max_{x_{(m)_i}>0}\frac{y_{(m)_i}}{(x_{(m)_i})^{k}}.
 \end{split}
\end{equation*}

 \State \textbf{Step 4:} If $\underline{\lambda}_m = \overline{\lambda}_m$, the algorithm stops. The spectral radius $\rho(\mathcal{A}^{(k)}) = \underline{\lambda}_m -1$ and $\mathbf{x}_{(m)}$ is the eigenvector corresponding to  $\rho(\mathcal{A}^{(k)})$.
       \State \qquad \qquad \quad If $\rho(\mathcal{A}^{(k)}) \geq 1$, assign to each vertex \(i\) the \(k\)-th order eigenvector centrality \(c(i) = x_{(m)_i}\), where \(c(i) > 0\) indicates that \(i\) belongs to the \(k\)-core.
       \State \qquad \qquad \quad If $\rho(\mathcal{A}^{(k)}) < 1$, output ``No $k$-core exists'', then terminate.

 \State $\hspace{1.5cm}$ Otherwise, replace $m$ by $m+1$ and go to Step 3.

    \end{algorithmic}
\end{algorithm}

\section{Experimental results}

In this section, we conduct a series of experiments to validate our theoretical findings and explore the practical characteristics of the proposed $k$-th order eigenvector centrality.
We begin with a case study on a synthetic graph for verification and illustrative purposes, followed by investigations into the structural correlates and comparative advantages of the measure on real-world networks.

\subsection{Case Study: Verification and Illustration}

We first present a case study on an example graph to verify our theoretical results and demonstrate the behavior of the proposed $k$-th order eigenvector centrality.
For comparison, we adopt three classic centrality measures as benchmarks, namely degree centrality, coreness centrality, and standard eigenvector centrality.
The degree centrality of a vertex $i$ is defined as the number of directly connected neighbors of $i$.
The coreness of a vertex $i$ is the maximum non-negative integer $k$ such that $i$ lies in a $k$-core subgraph where all vertices have a degree of at least $k$.
Eigenvector centrality of a vertex quantifies its importance by assigning a score corresponding to the Perron vector of the network's adjacency matrix.

\begin{figure}[htpb]
  \centering
  \includegraphics[width=70mm]{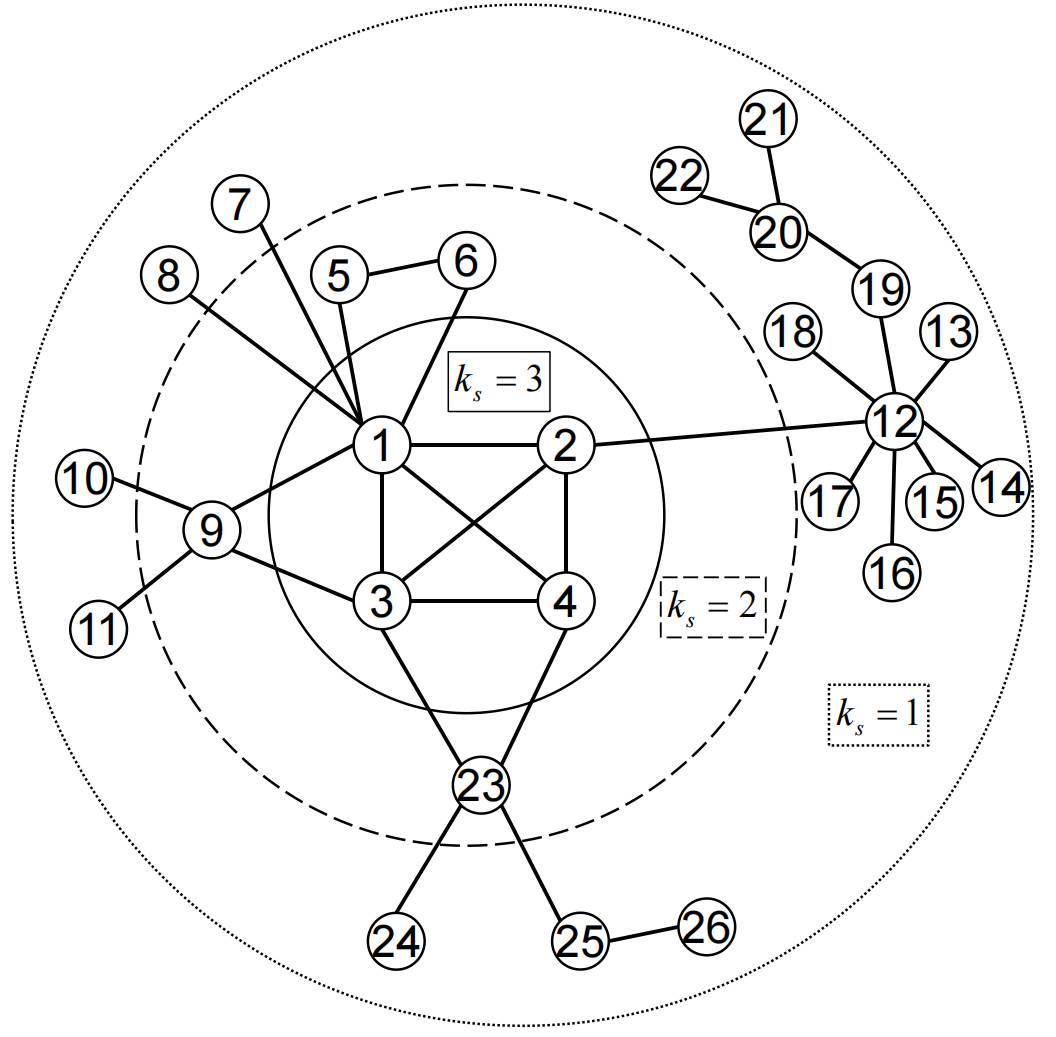}
  \caption{An example graph $G$ with 26 vertices. The labels $k_s = 1, 2, 3$ mark the encircled regions representing the 1-core, 2-core, and 3-core, respectively.}\label{fig2}
\end{figure}

\begin{table}[htbp]
  \centering 
  \caption{Comparison of vertex centrality measures including degree, coresness, $k$-th order eigenvector centrality ($k$ = 1, 2, 3)} 
  \label{tab:example} 
  \vspace{4mm}

  \begin{tabular}{cccccc}
    \toprule 
    \multicolumn{1}{c}{\multirow{3}{*}{Vertex}} &
    \multicolumn{1}{c}{\multirow{3}{*}{Degree}} &
    \multicolumn{1}{c}{\multirow{3}{*}{Coreness}} &
    \multicolumn{3}{c}{$k$-th order eigenvector centrality} \\
    \cmidrule(lr){4-6}
    & & & {$k=1$} & {$k=2$} & {$k=3$} \\
    & & & {$\rho_1(G) = 4.0306$} & {$\rho_2(G) = 4.8132$} & {$\rho_3(G) = 1$} \\
    \midrule

    1 &\textbf{8}&\textbf{3}&\textbf{0.5071}&\textbf{0.5123}&\textbf{0.2500} \\
    2 &4&\textbf{3}&0.3698&0.3871&\textbf{0.250}0 \\
    3 &5&\textbf{3}&0.4369&0.5119&\textbf{0.2500} \\
    4 &4&\textbf{3}&0.3839&0.4482&\textbf{0.2500} \\
    5 &2&2&0.1673&0.1064&0 \\
    6 &2&2&0.1673&0.1064&0 \\
    7 &1&1&0.1258&0&0 \\
    8 &1&1&0.1258&0&0 \\
    9 &4&2&0.2671&0.2334&0 \\
    10 &1&1&0.0663&0&0 \\
    11 &1&1&0.0663&0&0 \\
    12 &\textbf{8}&1&0.1625&0&0 \\
    13 &1&1&0.0403&0&0 \\
    14 &1&1&0.0403&0&0 \\
    15 &1&1&0.0403&0&0 \\
    16 &1&1&0.0403&0&0 \\
    17 &1&1&0.0403&0&0 \\
    18 &1&1&0.0403&0&0 \\
    19 &2&1&0.0434&0&0 \\
    20 &3&1&0.0123&0&0 \\
    21 &1&1&0.0030&0&0 \\
    22 &1&1&0.0030&0&0 \\
    23 &4&2&0.2333&0.2183&0 \\
    24 &1&1&0.0579&0&0 \\
    25 &2&1&0.0617&0&0 \\
    26 &1&1&0.0153&0&0 \\

    \bottomrule 
  \end{tabular}
\end{table}

\begin{figure}[htpb]
  \centering
  \includegraphics[width=0.9\textwidth]{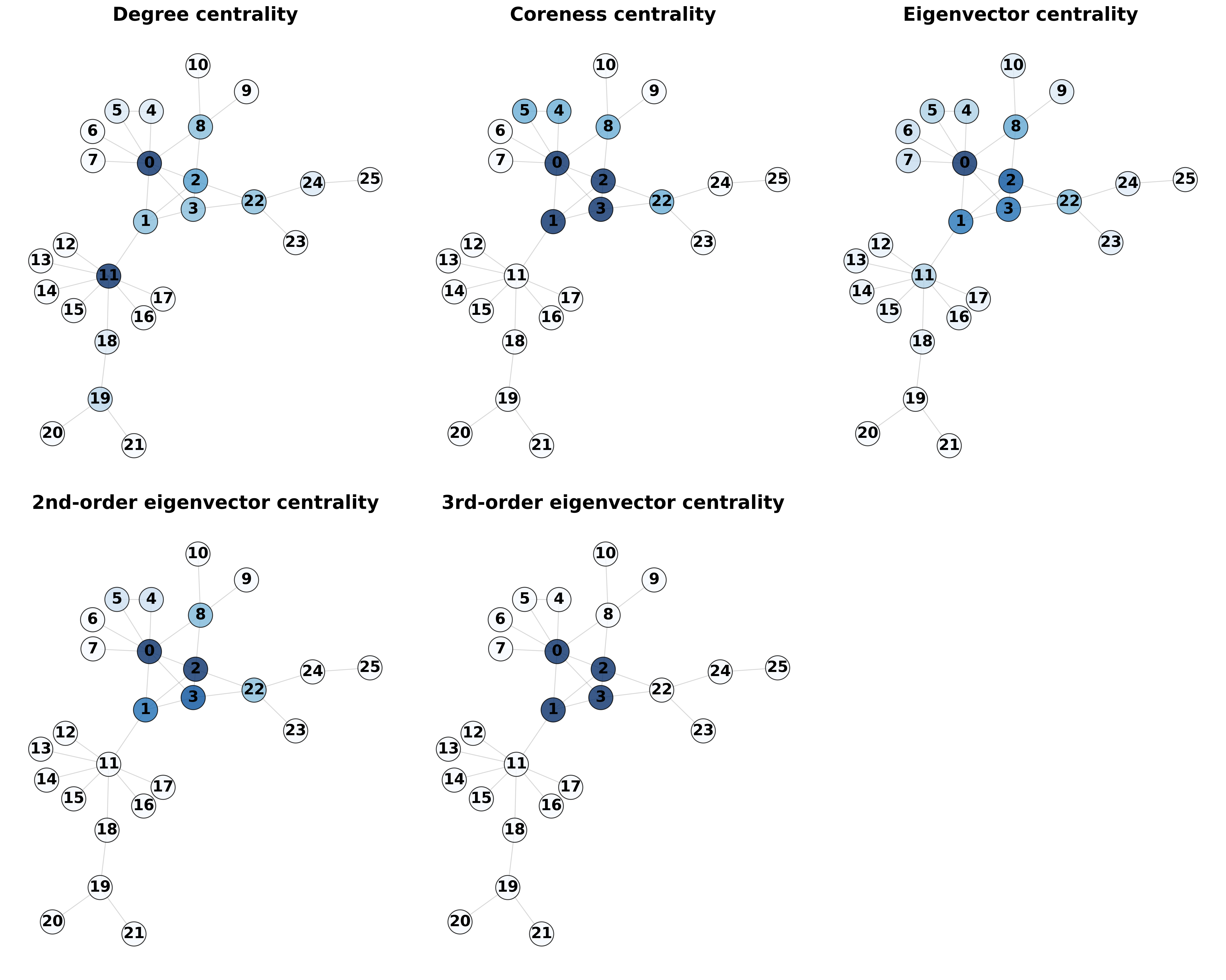}
  \caption{Visualizations of vertices centrality (The darker the color, the higher the centrality)}\label{fig3}
\end{figure}

\textbf{Example:}
Consider a graph $G$ with the 26 vertices shown in Figure \ref{fig2}, which is from Ref. \cite{Kitsak_2010}.
We computed five centrality measures, including degree centrality, coreness, $1$st-order eigenvector centrality, $2$nd-order eigenvector centrality, and $3$rd-order eigenvector centrality.
The results of these computations are presented in Table \ref{tab:example}, with a corresponding visualization of the graph and its centrality distribution provided in Figure \ref{fig3}.

As noted earlier, the $1$st-order eigenvector centrality coincides with the standard eigenvector centrality.
For its $1$-adjacency tensor (i.e., the adjacency matrix), we find that $\rho_1(G)=4.0306 >1$ and the Perron vector is positive entry-wise.
In this vector, vertex $1$ has the highest centrality score.
By Theorems \ref{thm:existence} and \ref{thm:localization}, it follows that $G$ has a $1$-core, which consists of all vertices $1$ to $26$.
This result corresponds to the coreness analysis, where all vertices have a coreness value greater than or equal to 1.

For its $2$-adjacency tensor, we have that $\rho_2(G) = 4.8132$ and the Perron vector is nonnegative entry-wise, with positive entries corresponding to vertices $1$, $2$, $3$, $4$, $5$, $6$, $9$, and $23$.
Consistent with the coreness analysis, these seven vertices constitute the $2$-core of $G$.
This result aligns with Theorems \ref{thm:existence} and \ref{thm:localization}.
In terms of $2$nd-order eigenvector centrality, vertex $1$ still has the highest centrality score.

For its $3$-adjacency tensor, we compute the spectral radius $\rho_3(G) = 1$ and the Perron vector is nonnegative entry-wise, with positive entries corresponding to vertices $1$, $2$, $3$, and $4$.
Only these four vertices have coreness $3$, and they form a $4$-clique.
In this case, they have equal centrality scores.

At last, we consider the $4$-adjacency tensor.
The spectral radius of its $4$-adjacency tensor, $\rho_4(G) = 0$.
By Theorem \ref{thm:existence}, $G$ has no $4$-core, which is consistent with our observation.

\subsection{Structural Correlates of the Centrality Measure}
In this and the following sections, we conduct experiments on five widely used real-world networks: the Zachary's Karate Club network (34 vertices, 78 edges), the USAir97 network (332 vertices, 2,126 edges), the NetScience network (379 vertices, 914 edges), the Email network (1,133 vertices, 5,451 edges), and the Erdos02 network (5,534 vertices, 8,472 edges).
All datasets are publicly available from the SuiteSparse Matrix Collection at https://sparse.tamu.edu/.

The first experiment focuses on centrality scores and the graph's structural characteristics.
When investigating vertex centrality, a fundamental question emerges: which graph's structure influences centrality scores?
For example, degree centrality refers to the number of edges incident to the vertex.
Consider $N_k(i)$, the number of walks of length $k$ starting at vertex $i$ of a non-bipartite connected graph.
Let $s_k(i) = \frac{N_k(i)}{\sum_{j=1}^{n}N_k(j)}$, then for $k \rightarrow \infty$, the vector $(s_k(1),s_k(2),\cdots,s_k(n))^{\mathrm{T}}$ tends towards the eigenvector centrality \cite{Cvetkovic_1997}.
Similarly, the $k$-th order eigenvector centrality of a vertex is likely correlated with its participation in well-connected subgraphs where every vertex has degree at least $k$.
Specifically, we hypothesize that the $2$nd-order eigenvector centrality is strongly correlated with cycle structures.
We demonstrate this correlation by comparing the $2$nd-order eigenvector centrality scores of vertices with the number of cycles containing each vertex across several real-world networks.
Since enumerating all cycles is infeasible for most networks given the enormous computational complexity \cite{Fan_2021}, we count cycles containing the vertex with a length at most 5.

We use $C_k(i)$ $(k=3,4,5)$ to denote the total number of cycles containing the vertex $i$ with a length at most $k$.
Specifically, $C_3(i)$ represents the number of triangles (3-cycle) containing vertex $i$, $C_4(i)$ counts the total number of triangles and 4-cycles containing vertex $i$, and similarly for $C_5(i)$.
We assume $C_k(i)$ $(k=3,4,5)$ be a measure and present the correlation scatter in Figure 3.
As we can see that the $2$nd-order eigenvector centrality exhibits a strong correlation with cycle structures and this correlation strengthens as more cycles are incorporated.
We calculated the Spearman correlation coefficient $r_s$ between the cycle count vector and the 2nd-order eigenvector centrality, with the results reported in Table \ref{tab:coe}.
When we count the number of cycles with length at most 5, the Spearman correlation coefficient are more than 0.9.
This result verify that the $2$nd-order eigenvector centrality is strongly correlated with the cycle structures.

\begin{table}[htbp]
  \centering 
  \caption{Spearman correlation coefficient $r_s$ between the cycles with length at most 3, 4, 5 and 2nd-order eigenvector centrality (2nd EC).} 
  \label{tab:coe} 
  \vspace{4mm}

  \begin{tabular}{cccccc}
    \toprule 
    Networks & $r_s$($C_3$, 2nd EC) & $r_s$($C_4$, 2nd EC) & $r_s$($C_5$, 2nd EC)\\
    \midrule
    Karate & 0.7034 & 0.9394 & 0.9934\\
Jazz & 0.8949 &0.9239 &0.9491\\
USAir97 & 0.9543 &0.9923 &0.9976\\
Email & 0.8586 & 0.9234 & 0.9604\\
Erdos02 & 0.8126 &0.9950 &0.9976\\
    \bottomrule 
  \end{tabular}
\end{table}

\begin{figure}[htbp]
    \centering
    \includegraphics[width=0.8\textwidth]{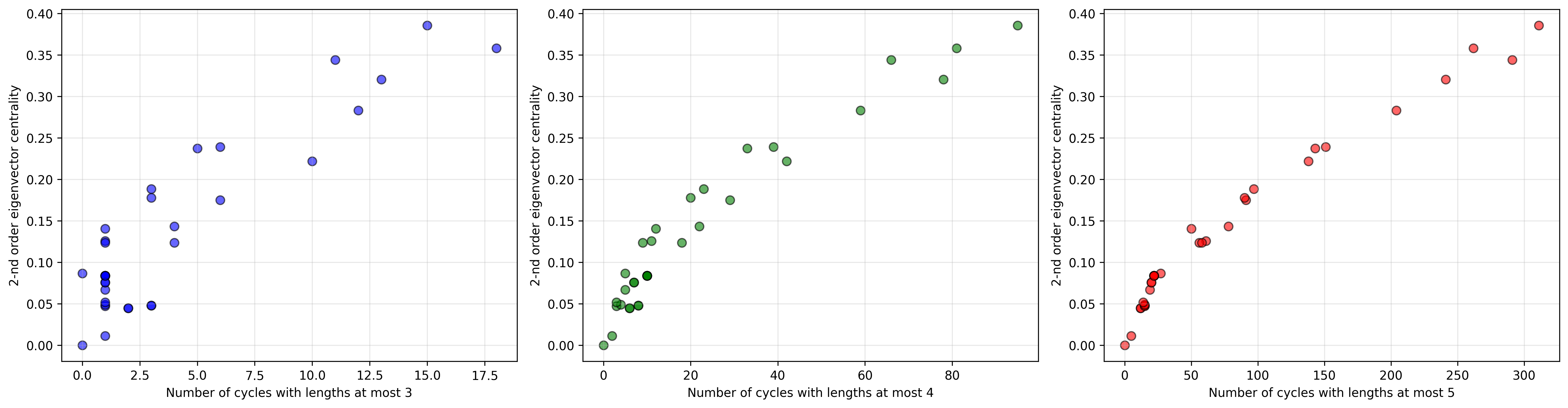}
    \subcaption{Karate network}\label{fig:centrality_cycle_sub1}
    \includegraphics[width=0.8\textwidth]{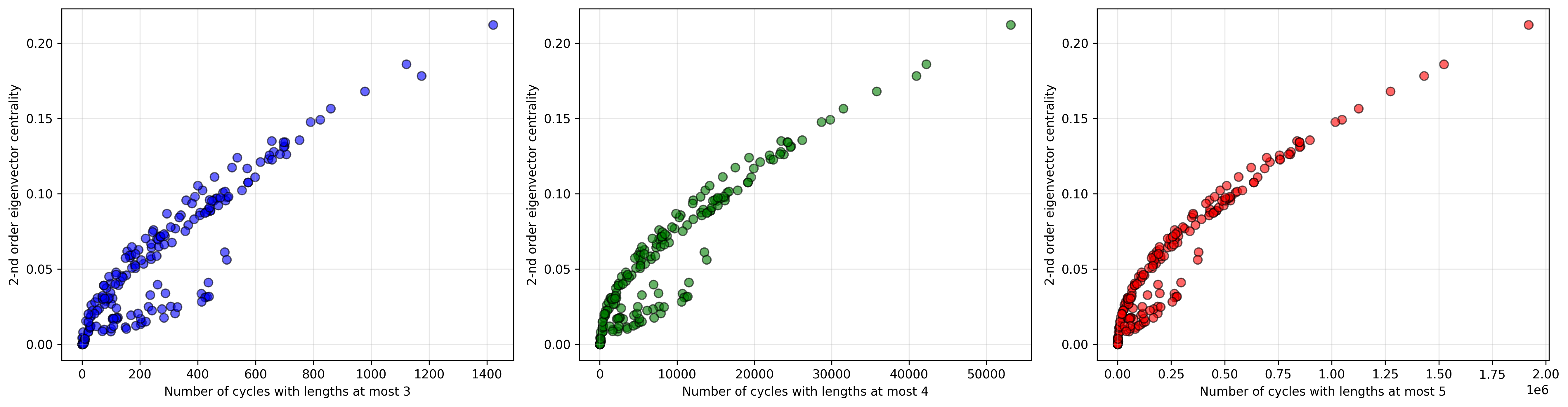}
    \subcaption{Jazz network}\label{fig:centrality_cycle_sub2}
    \includegraphics[width=0.8\textwidth]{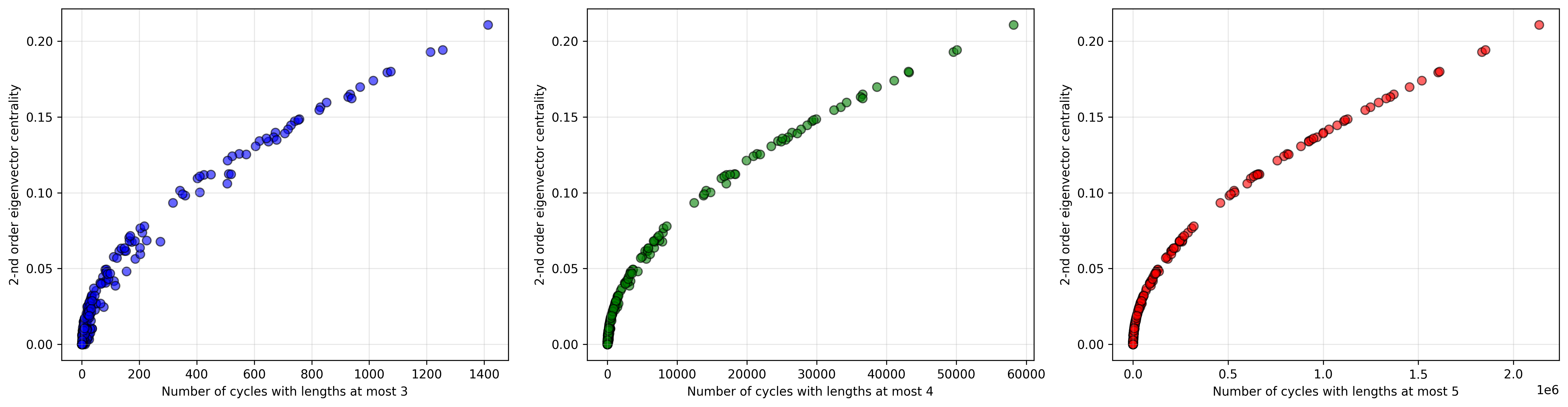}
    \subcaption{USAir97 network}\label{fig:centrality_cycle_sub3}
    \includegraphics[width=0.8\textwidth]{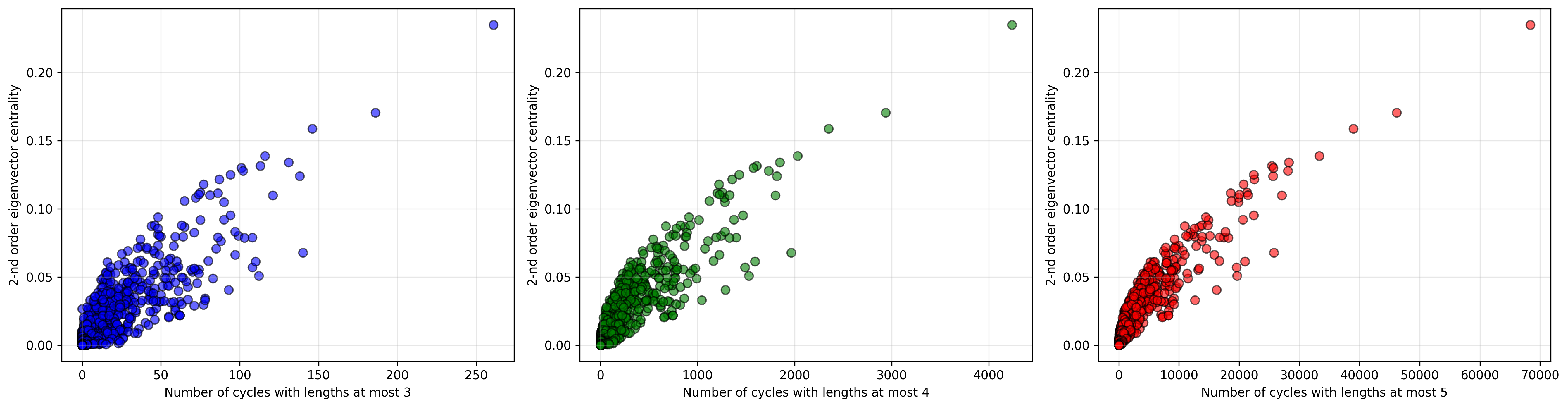}
    \subcaption{Email network}\label{fig:centrality_cycle_sub4}
    \includegraphics[width=0.8\textwidth]{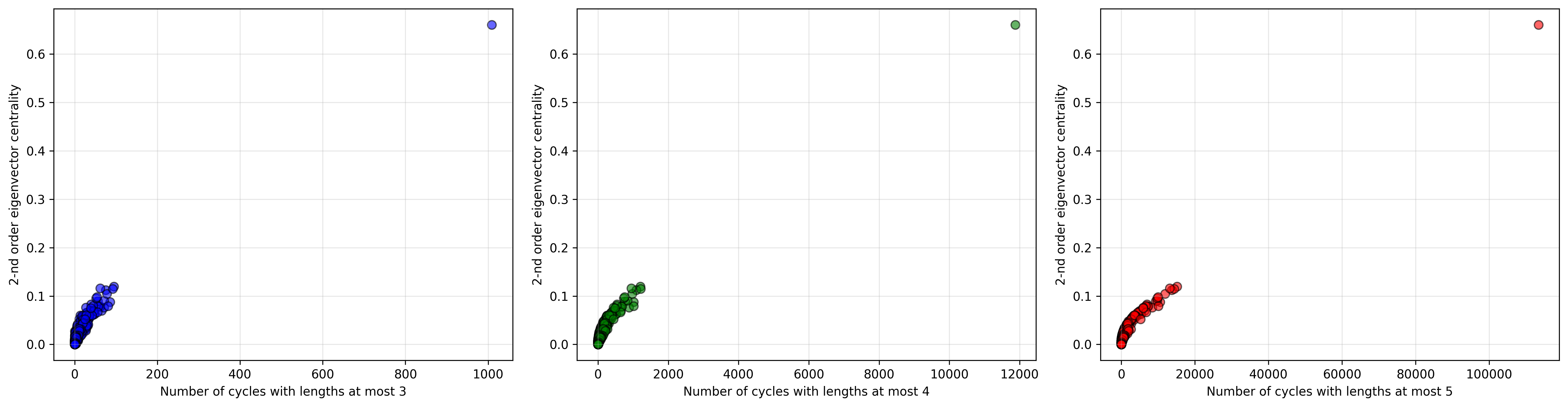}
    \subcaption{Erdos02 network}\label{fig:centrality_cycle_sub5}

    \caption*{Figure 3: Comparison between the number of cycles with lengths up to 3, 4, and 5 and 2nd-order eigenvector centrality across five real-world networks}
    \label{fig:centrality_cycle_comparison} 
\end{figure}

\subsection{Comparative Analysis with Classical Centrality Measures}

To investigate how the proposed 2nd-order eigenvector centrality (2nd EC) relates to classical centrality measures, we compute four centrality indices for each of the five real-world networks in Section 5.2: degree centrality (DC), coreness centrality (CC), standard eigenvector centrality (EC), and the proposed 2nd-order eigenvector centrality (2nd EC).
Pairwise correlation scatter plots between DC, EC, CC and 2nd EC are shown in Fig. 4, where each point represents a vertex.
We also report the Spearman rank correlation coefficients in Table \ref{tab:centraliy_compare}.

\begin{figure}[htbp]
    \centering
    \includegraphics[width=0.8\textwidth]{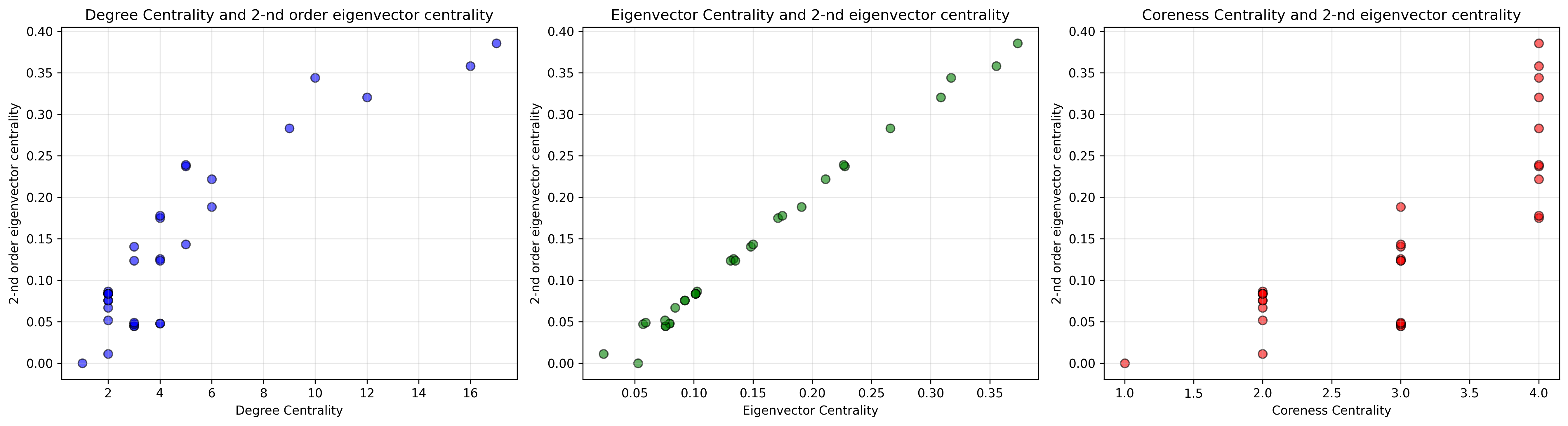}
    \subcaption{Karate network}\label{fig:sub1} 

    \includegraphics[width=0.8\textwidth]{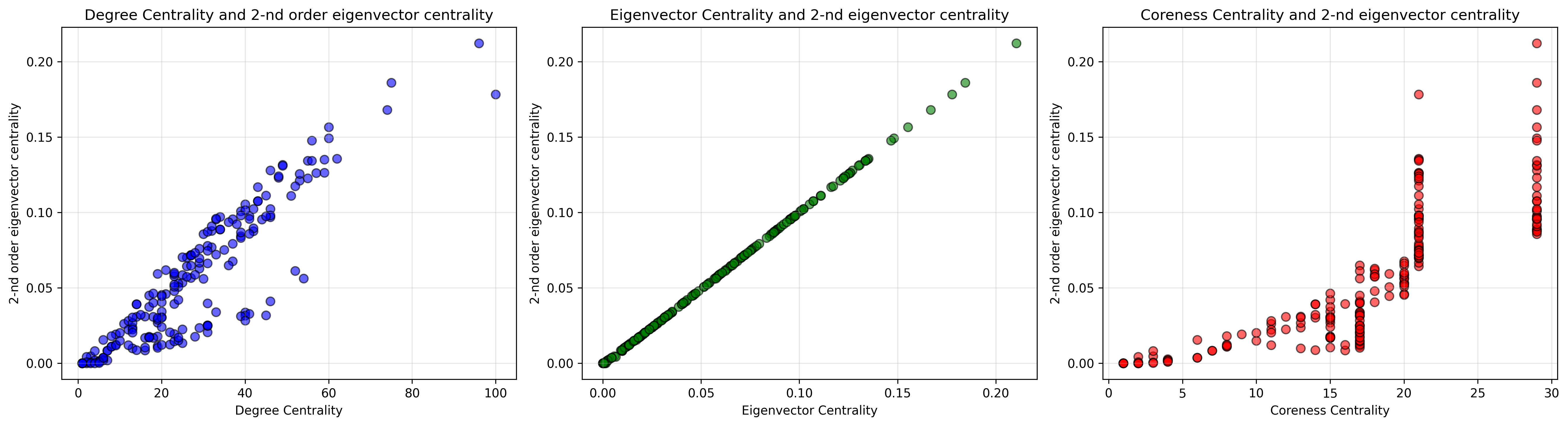}
    \subcaption{Jazz network}\label{fig:sub2}

    \includegraphics[width=0.8\textwidth]{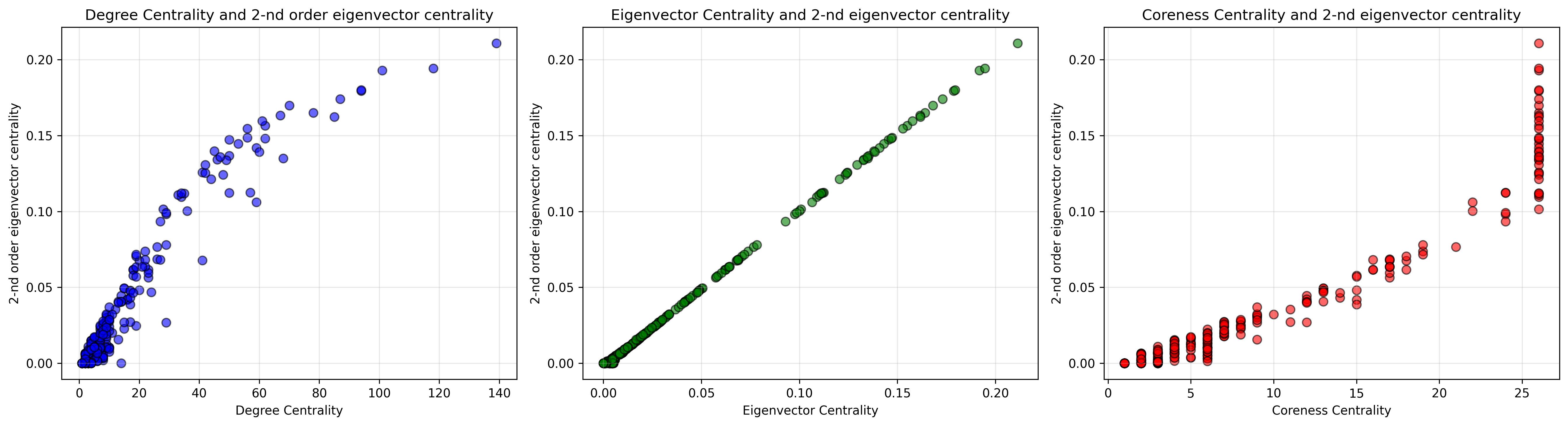}
    \subcaption{USAir97 network}\label{fig:sub3}

    \includegraphics[width=0.8\textwidth]{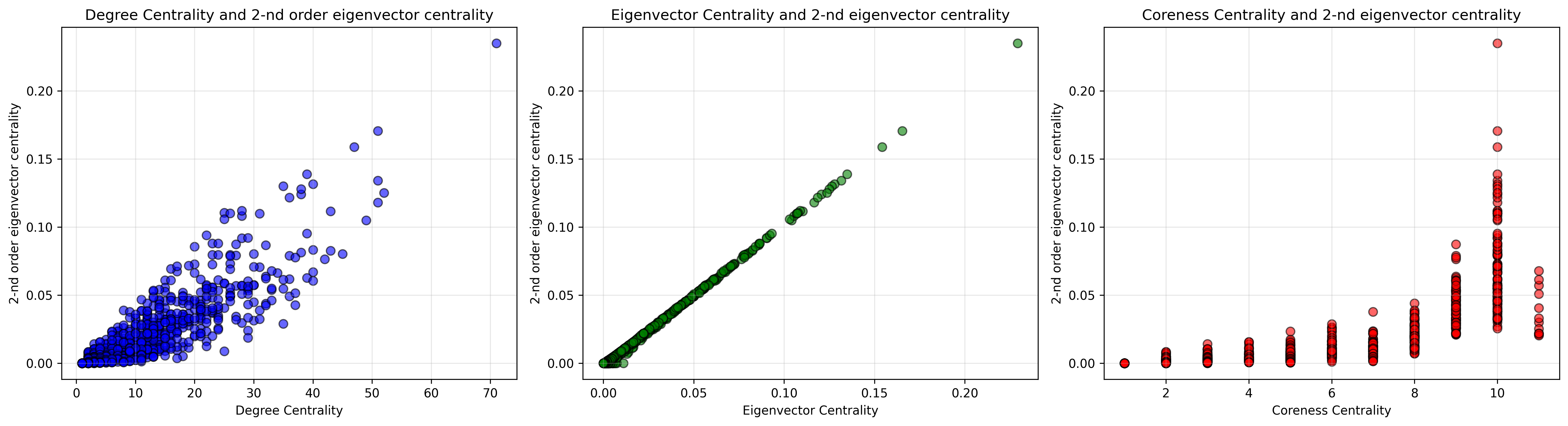}
    \subcaption{Email network}\label{fig:sub4}

    \includegraphics[width=0.8\textwidth]{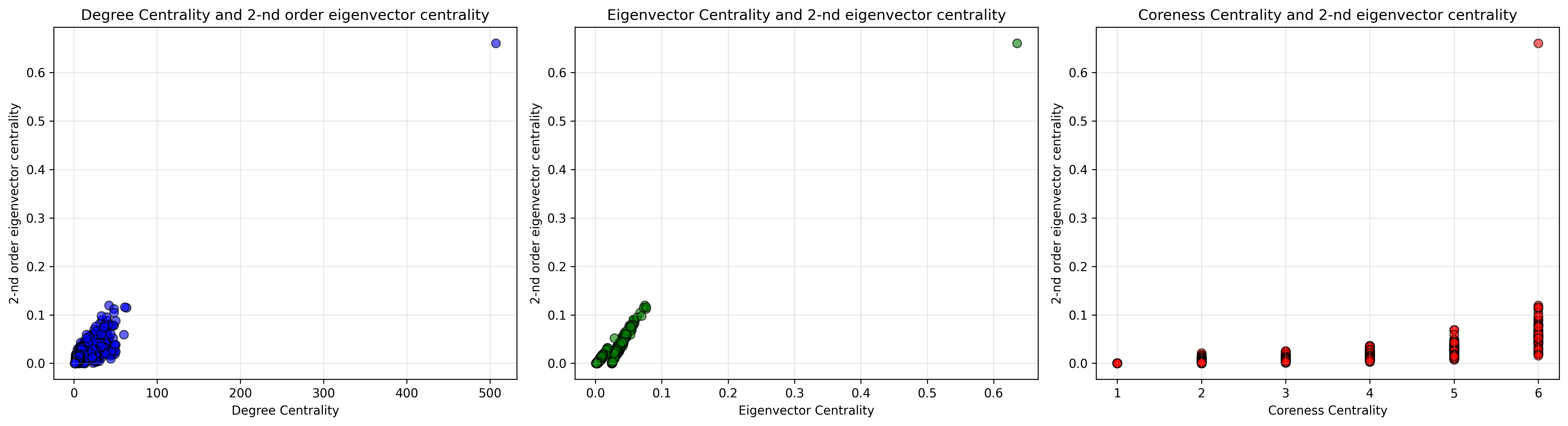}
    \subcaption{Erdos02 network}\label{fig:sub5}

    \caption*{Figure 4: Comparison of Degree centrality, Eigenvector centrality, Coreness and 2nd-order eigenvector centrality on five real-world networks}
    \label{fig:all2}
\end{figure}

\begin{table}[htbp]
  \centering 
  \caption{Spearman correlation coefficient $r_s$ between degree centrality (DC), coreness centrality (CC), standard eigenvector centrality (EC) and 2nd-order eigenvector centrality (2nd EC).} 
  \label{tab:centraliy_compare} 
  \vspace{4mm}

  \begin{tabular}{cccccc}
    \toprule 
    Networks & $r_s$(DC, 2nd EC) & $r_s$(EC, 2nd EC) &$r_s$ (CC, 2nd EC)\\
    \midrule
    Karate & 0.7583 & 0.9873 & 0.7189\\
Jazz & 0.8907 &0.9998 &0.9275\\
USAir97 & 0.9460 &0.9473 &0.9658\\
Email & 0.9763 & 0.9816 & 0.9357\\
Erdos02 & 0.9904 &0.7878 &0.9931\\
    \bottomrule 
  \end{tabular}
\end{table}

For degree and the 2nd-order eigenvector centrality (left column in Figure 4), in small networks (Karate, Jazz), the correlation is strong but not perfect.
As the network's size increase (USAir97 , Email , Erdos02), the relationship becomes increasingly correlated, with Spearman correlation coefficients rising from 0.7583 to 0.9904.

For the eigenvector centrality and the 2nd-order eigenvector centrality (middle column in Figure 4), the two eigenvector-based measures demonstrate high consistency across most networks.
The correlation is nearly perfect in the Jazz network ($r_s= 0.9998$)  and remains strong in the USAir97 ($r_s= 0.9473$) and Email networks ($r_s= 0.9816$).
An exception arises in the Erdos02 collaboration network, where the coefficient drops to 0.7878.
This can be attributed to the fact that 3948 vertices in Erdos02 network belong to the $1$-core but not the $2$-core; by the definition of the 2nd-order eigenvector centrality, these vertices have a centrality score of 0, whereas in standard eigenvector centrality, their scores are non-zero.

The correlation between the coreness centrality (CC) and the 2nd-order eigenvector centrality (2nd EC) exhibits differences, a phenomenon that stems from the distinctions in their definitions: the CC quantifies a vertex's hierarchical position via $k$-core decomposition, while the 2nd EC characterizes a vertex's importance based on 2-adjacency neighborhood.
In the Karate network (\(r_s = 0.7189\)) and the Jazz network (\(r_s = 0.9275\)), the two measures show a moderate correlation.
This indicates that in these two networks, the vertex's core hierarchy reflected by the CC and the 2-adjacency structural importance captured by the 2nd EC exhibit partial alignment but not complete consistency, reflecting differences in the perspectives of the two indicators on vertex's importance.
In the USAir97 network (\(r_s = 0.9658\)) and the Erdos02 network (\(r_s = 0.9931\)), they demonstrate a high correlation.
The correlation in the Email network falls between the above two categories (\(r_s = 0.9357\)), indicating a moderate alignment between the CC and the 2nd EC in this network.

\section{Concluding remarks}

This paper established a novel spectral perspective on the $k$-core decomposition of graphs by investigating its connection to the high-order spectra.
Our main contributions are threefold:

\begin{itemize}
    \item \textbf{Spectral Criterion for Existence:} We proved that a graph admits a non-empty $k$-core \textbf{if and only if} the spectral radius of its $k$-adjacency tensor satisfies $\rho_k(G) \geq 1$.
    \item \textbf{Spectral Localization of the Core:} We showed that the support of the Perron vector of the $k$-adjacency tensor is contained within the $k$-core, and equals it when the core is connected, providing a spectral method to identify its members.
    \item \textbf{A Novel Centrality Measure:} Building on these insights, we proposed the \textbf{$k$-th order eigenvector centrality}, a core-restricted measure that generalizes classical eigenvector centrality and quantifies vertex importance within the $k$-core.
\end{itemize}

Experimental results on real-world networks confirmed our theoretical findings and demonstrated the utility of the proposed centrality.
While our approach offers a principled spectral framework, its computational efficiency for large-scale networks requires further optimization due to the complexity of tensor operations \cite{Lim_2013}.
Future work will focus on developing more scalable algorithms and extending the framework to dynamic network settings.



\vspace{3mm}
\noindent

\end{spacing}

\begin{thebibliography}{00}

\bibitem{Erdos_1966}
P. Erd\H{o}s, A. Hajnal, On chromatic number of graphs and set-systems, Acta Math. Acad. Sci. Hungar. 17 (1--2) (1966) 61--99.

\bibitem{Lick_1970}
Don R. Lick, Arthur T. White, $k$-degenerate graphs, Canad. J. Math. 22 (5) (1970) 1082--1096.

\bibitem{Seidman_1983}
S. B. Seidman, Network structure and minimum degree, Soc. Netw. 5 (1983) 269--287.

\bibitem{Kong_2019}
Y. X. Kong, G. Y. Shi, R. J. Wu, Y. C. Zhang, $k$-core: Theories and applications, Phys. Rep. 832 (2019) 1--32.

\bibitem{Luczak_1991}
T. {\L}uczak, Size and connectivity of the $k$-core of a random graph, Discrete Math. 91 (1) (1991) 61--68.

\bibitem{Pittel_1996}
B. Pittel, J. Spencer, N. Wormald, Sudden emergence of a giant $k$-core in a random graph, J. Combin. Theory Ser B. 67 (1) (1996) 111--151.

\bibitem{Riordan_2008}
O. Riordan, The $k$-core and branching processes, Combin. Probab. Comput. 17 (1) (2008) 111--136.

\bibitem{Cvetkovic_1980}
D. Cvetkovi\'{c}, M. Doob, H. Sachs, Spectra of Graphs. Theory and Applications, Academic Press, New York, 1980.

\bibitem{Liu_2023}
C. Liu, J. Zhou, C. Bu, The high order spectrum of a graph and its applications in graph colouring and clique counting, Linear Multilinear Algebra 71 (2023) 2354--2365.

\bibitem{Qi_2005}
L. Qi, Eigenvalues of a real supersymmetric tensor, J. Symb. Comput. 40 (2005) 1302--1324.

\bibitem{Lim_2005}
L. H. Lim, Singular values and eigenvalues of tensors: a variational approach, in: 1st IEEE International Workshop on Computational Advances in MultiSensor Adaptive Processing, 2005, pp. 129--132.

\bibitem{app_3}
B. Chen, S. He, Z. Li, S. Zhang, On new classes of nonnegative symmetric tensors, SIAM J. Optim. 27 (1) (2017) 292--318.

\bibitem{app_4}
A. R. Benson, D. F. Gleich, L. H. Lim, The spacey random walk: a stochastic process for higher-order data, SIAM Rev. 59 (2) (2017) 321--345.

\bibitem{app_5}
M. Brazell, N. Li, C. Navasca, C. Tamon, Solving multilinear systems via tensor inversion, SIAM J. Matrix Anal. Appl. 34 (2013) 542--570.

\bibitem{app_6}
L. Sun, B. Zheng, C. Bu, Y. Wei, Moore-Penrose inverse of tensors via Einstein product, Linear Multilinear Algebra 64 (4) (2016) 686--698.

\bibitem{app_7}
J. Cooper, A. Dutle, Spectra of uniform hypergraphs, Linear Algebra Appl. 436 (9) (2012) 3268--3292.

\bibitem{app_8}
Y. Chen, L. Qi, X. Zhang, The Fiedler vector of a Laplacian tensor for hypergraph partitioning, SIAM J. Sci. Comput. 39 (2017) A2508--A2537.

\bibitem{app_9}
H. Li, B. Mohar, On the first and second eigenvalue of finite and infinite uniform hypergraphs, Proc Amer Math Soc. 147 (3) (2019) 933--946.

\bibitem{app_10}
Y. Fan, T. Huang, Y. Bao, C. Zhuan-Sun, Y. Li, The spectral symmetry of weakly irreducible nonnegative tensors and connected hypergraphs, Trans Amer Math Soc. 372 (3) (2019) 2213--2233.

\bibitem{app_11}
J. Clark, J. Cooper. A Harary-Sachs theorem for hypergraphs. J. Combin. Theory Ser B. 149 (2021) 1--15.

\bibitem{Liu_2023_2}
C. Liu, C. Bu, On a generalization of the spectral mantel's theorem, J. Comb. Optim. 46 (2023) 14.

\bibitem{Liu_2024}
C. Liu, J. Zhou, C. Bu, The high order spectral extremal results for graphs and their applications, Discrete Appl. Math. 357 (2024) 209--214.

\bibitem{Liu_2026}
C. Liu, C. Bu, A tensor's spectral bound on the clique number, Discrete Math. 349 (2026) 114694.

\bibitem{Peng_2025}
L. Yu, Y. Peng, A spectral version of the theorem of Zykov and Erd\H{o}s, Electron. J. Comb. 32 (4) (2025) P4.15.

\bibitem{Peng_2026_1}
Z. Yan, B. Yang, Y. Peng, A spectral generalized Alon-Frankl theorem, Discrete Math. 349 (2026) 114785.

\bibitem{Peng_2026_2}
L. Yu, Y. Peng, A spectral generalized Erd\H{o}s-Gallai theorem, Discrete Math. 349 (2026) 114672.

\bibitem{Wang_2025}
Y. Wang, L. Sun, C. Bu, The high order spectral radius of graphs without long cycles or paths, arXiv:2510.04461.

\bibitem{Lu_2016}
L. L\"{u}, D. Chen, X. L. Ren, Q. M. Zhang, Y. C. Zang, T. Zhou, Vital nodes identification in complex networks, Phys. Rep. 650 (2016) 1--63.

\bibitem{Lu_2025}
D. Chen, J. Chen, X. Zhang, Q. Jia, X. Liu, Y. Sun, L. L\"{u}, W. Yu, Critical nodes identification in complex networks: a survey, Complex Eng. Syst. 2025, 5, 11.

\bibitem{Bonacich_1987}
P. Bonacich, Power and centrality: a family of measures, Am. J. Sociol. 92 (5) (1987) 1170--1182.

\bibitem{Bonacich_2007}
P. Bonacich, Some unique properties of eigenvector centrality, Social Networks, 29 (4) (2007) 555--564.

\bibitem{Fan_2020}
M. Fan, Z. Cao, J. Cheng, F. Yang, X. Qi, Degree-like centrality with structural zeroes or ones: When is a neighbor not a neighbor? Social Networks 63 (2020) 38--46.

\bibitem{Gleich_2015}
D. F. Gleich, PageRank beyond the web, SIAM Rev. 57 (3) (2015) 321--363.

\bibitem{Benson_2019}
A. R. Benson, Three hypergraph eigenvector centralities, SIAM J. Math. Data Sci. 1 (2) (2019) 293--312.

\bibitem{Tudisco_2021}
F. Tudisco, D. J. Higham, Node and edge nonlinear eigenvector centrality for hypergraphs, Commun. Phys. 4 (1) (2021) 201.

\bibitem{Tortosa_2021}
L. Tortosa, J. F. Vicent, G. Yeghikyan, An algorithm for ranking the nodes of multiplex networks with data based on the PageRank concept, Appl. Math. Comput. 392 (2021) 125676.

\bibitem{Tudisco_2018}
F. Tudisco, F. Arrigo, A. Gautier, Node and layer eigenvector centralities for multiplex networks, SIAM J. Appl. Math. 78 (2) (2018) 853--876.

\bibitem{Xu_2023}
Q. Xu, L. Sun, C. Bu, The two-steps eigenvector centrality in complex networks, Chaos Soliton Fract. 137 (2023) 113753.

\bibitem{Wu_2019}
M. Wu, S. He, Y. Zhang, J. Chen, Y. Sun, Y. Y. Liu, J. Zhang, H. V. Poor, A tensor-based framework for studying eigenvector multicentrality in multilayer networks, Proc. Natl. Acad. Sci. 116 (31) (2019) 15407--15413.

\bibitem{Zhang_2024}
Q. Zhang, L. Sun, C. Bu, The subgraph eigenvector centrality of graphs, arXiv:2411.12409.

\bibitem{Bu_2025}
C. Bu, Y. Chu, Q. Zhang, J. Zhou, Perron-Frobenius theorem for dual tensors and its applications, arXiv:2509.26202.

\bibitem{Perron Th.}
S. Friedland, S. Gaubert, L. Han, Perron-Frobenius theorem for nonnegative multilinear forms and extensions, Linear Algebra Appl. 438 (2013) 738--749.

\bibitem{Ng_2010}
M. Ng, L. Qi, G. Zhou, Finding the largest eigenvalue of a nonnegative tensor, SIAM J. Matrix Anal. Appl. 31 (2010)  1090--1099.

\bibitem{Liu_2010}
Y. Liu, G. Zhou, N. F. Ibrahim, An always convergent algorithm for the largest eigenvalue of an irreducible nonnegative tensor, J. Comput. Appl. Math, 235 (2010) 286--292.

\bibitem{Zhou_2013}
G. Zhou, L. Qi, S. Y. Wu, Efficient algorithms for computing the largest eigenvalue of a nonnegative tensor, Front. Math. China, 8 (2013) 155--168.

\bibitem{Kitsak_2010}
M. Kitsak, L. K. Gallos, S. Havlin, F. Liljeros, L. Muchnik, H. E. Stanley, H. A. Makse, Identification of influential spreaders in complex networks, Nat. Phys. 6 (2010) 888--893.

\bibitem{Cvetkovic_1997}
D. M. Cvetkovi\'{c} , P. Rowlinson, S. Simic, Eigenspaces of Graphs, Cambridge University Press, 1997.

\bibitem{Fan_2021}
T. Fan, L. L\"{u}, D. Shi, T. Zhou, Characterizing cycle structure in complex networks, Commun. Phys. 4 (1) (2021) 272.

\bibitem{Lim_2013}
C. J. Hillar, L. H. Lim, Most tensor problems are NP-hard, J. ACM 60 (6) (2013) 1--39.

\end{thebibliography}
\end{document}